\Crefname{diagram}{Diagram}{Diagrams}
\newcounter{todo}
\newcounter{todonow}
\newcounter{quest}
\newcommand*{\comp}{\circ}
\newcommand*{\conjact}[2]{c^{{#1},{#2}}}
\newcommand*{\coop}[2]{\varphi_{{#1},{#2}}}
\newcommand*{\Cosmash}{\diamond}
\newcommand*{\cosmash}[2]{\iota_{{#1},{#2}}}
\newcommand*{\Def}{\coloneqq}
\newcommand*{\del}{\partial}
\newcommand*{\diag}{\Delta}
\newcommand*{\from}{\colon}
\newcommand*{\join}{\vee}
\newcommand*{\kerb}[2]{\kappa_{{#1},{#2}}}
\DeclarePairedDelimiter{\CoindArr}{\langle}{\rangle}
\DeclarePairedDelimiterX{\Commut}[2]{[}{]}{{#1},{#2}}
\DeclarePairedDelimiterX{\GenRel}[2]{\langle}{\rangle}{{#1} \mathrel{\delimsize|} {#2}}
\DeclarePairedDelimiterX{\SetDelim}[1]{\{}{\}}{
  
  {#1}
}
\DeclareMathOperator{\Img}{Im}
\DeclareMathOperator{\Ker}{Ker}
\newcommand*{\C}{\mathcal{C}}
\newcommand*{\cat}[1]{\textnormal{\textsf{#1}}}
\newcommand*{\Grp}{\cat{Grp}}
\newcommand*{\cond}{\textnormal}
\newcommand*{\PCM}{\cond{(PCM)}}
\newcommand*{\PFF}{\cond{(PFF)}}
\newcommand*{\SH}{\cond{(SH)}}
\newcommand*{\SM}{\cond{(SM)}}
\newcommand*{\mono}{tail}
\newcommand*{\nmono}{Triangle[open, reversed]->}
\newcommand*{\repi}{-Triangle[open]}
\DeclareMathDelimiter{\AMSlrcorner}{\mathclose}{AMSa}{"79}{AMSa}{"79}
\newcommand*{\pb}[2][.2]{
  \arrow[#2, to path={
    let
    \p0 = ($(\tikztotarget.center)-(\tikztostart.center)$)
    in
    (\tikztostart.center) -- +(-#1*\y0,#1*\y0) node{\Large{\(\AMSlrcorner\)}}
  }, phantom]
}
\newcommand*{\po}[2][.2]{
  \arrow[#2, to path={
    let
    \p0 = ($(\tikztotarget.center)-(\tikztostart.center)$)
    in
    (\tikztostart.center) -- +(-#1*\y0,#1*\y0) node{\rotatebox{180}{\Large{\(\AMSlrcorner\)}}}
  }, phantom]
}
\newlist{tfae}{enumerate}{1}
\setlist[tfae]{label=\textnormal{(\roman*)}}
\newlist{assert}{enumerate}{1}
\setlist[assert]{label=\textnormal{(\alph*)}}
\newenvironment*{eqdiagr}
  {\crefalias{equation}{diagram}\begin{equation}}
  {\end{equation}\ignorespacesafterend}
\newtheorem*{theorem*}{Theorem}
\newtheorem{theorem}[subsection]{Theorem}
\newtheorem*{proposition*}{Proposition}
\newtheorem{proposition}[subsection]{Proposition}
\newtheorem*{property*}{Property}
\newtheorem*{properties*}{Properties}
\newtheorem*{lemma*}{Lemma}
\newtheorem{lemma}[subsection]{Lemma}
\newtheorem*{corollary*}{Corollary}
\newtheorem{corollary}[subsection]{Corollary}
\theoremstyle{definition}
\newtheorem*{definition*}{Definition}
\newtheorem{definition}[subsection]{Definition}
\newtheorem*{definitions*}{Definitions}
\newtheorem{definitions}[subsection]{Definitions}
\newtheorem*{notation*}{Notation}
\newtheorem*{notations*}{Notations}
\newtheorem*{example*}{Example}
\newtheorem{example}[subsection]{Example}
\newtheorem*{examples*}{Examples}
\newtheorem{examples}[subsection]{Examples}
\newtheorem*{counterexample*}{Counterexample}
\newtheorem*{counterexamples*}{Counterexamples}
\theoremstyle{remark}
\newtheorem*{remark*}{Remark}
\newtheorem{remark}[subsection]{Remark}
\newtheorem*{remarks*}{Remarks}
\newtheorem*{reminder*}{Reminder}
\newtheorem*{reminders*}{Reminders}
\begin{document}

\title[Twisted Commutators and Internal Crossed Modules]{Twisted Commutators and \\ Internal Crossed Modules}
\author{Bo~Shan \textsc{Deval}}
\author{Tim \textsc{Van~der Linden}}

\email[Bo~Shan \textsc{Deval}]{bo.deval@uclouvain.be}
\email[Tim \textsc{Van~der Linden}]{tim.vanderlinden@uclouvain.be}

\address[Bo~Shan \textsc{Deval}, Tim \textsc{Van~der Linden}]{Institut de Recherche en Math\'ematique et Physique, Universit\'e catholique de Louvain, che\-min du cyclotron~2 bte~L7.01.02, B--1348 Louvain-la-Neuve, Belgium}

\thanks{Both authors were funded by the Fonds de la Recherche Scientifique--FNRS\@: the first through a FRIA Doctoral Grant, the second as a Senior Research Associate}

\dedicatory{Dedicated to Hvedri Inassaridze on the occasion of his 90th birthday}

\subjclass[2020]{18D40, 18E13, 18G45}

\keywords{Commutator; internal action; internal crossed module; semi-abelian category}

\begin{abstract}
    We introduce a notion of \emph{relative commutator}---an important special case being \emph{commutators twisted by an action}---as a straightforward modification of the definition of the \emph{Higgins commutator}, establish its relation with a new notion of commutativity---also obtained as a modification of the usual notion---and show how we can use it to characterise \emph{internal crossed modules} in the context of a semi-abelian category.
\end{abstract}

\maketitle

\section{Introduction}
The aim of this article is to initiate a theory of \emph{commutativity twisted by an action}. The wish for such a development arose in a recent investigation in semi-abelian cohomology (on the relation between~\cite{RVdL16} and~\cite{SVdL21}) where in order to better understand a certain higher-dimensional commutator, a decomposition into simpler commutators would be helpful. The situation there does, however, involve non-trivial actions, which calls for an appropriate categorical ``twisted commutator theory''. Our purpose here is not to go into those more involved applications, but rather to develop basic foundations, which we then apply to a simple test case: we characterise internal crossed modules in a semi-abelian category (in the sense of~\cite{JMT02} and~\cite{Jan03}) by means of twisted commutators.

The definition of a \emph{twisted Higgins commutator} is a straightforward modification of the usual definition~\cite{Hig56,MM10b} and agrees with it in the case of a trivial action. In a fixed semi-abelian category, we let an object \(B\) act on an object \(X\) via \(\xi \from B \flat X \to X\). (Details on how such a map codifies an action are recalled in \cref{sec:commutators}; essential for us here is the equivalence between actions and split extensions via a semi-direct product construction.) We consider the induced split short exact sequence (SSES)
\begin{eqdiagr}
    \label{diagr:SES xi}
    \begin{tikzcd}
        0 \arrow[r] & X \arrow[r, "k"] & X \rtimes_\xi B \arrow[r, "d", shift left] & B \arrow[l, "e", shift left] \arrow[r] & 0
    \end{tikzcd}
\end{eqdiagr}
and the following couniversal property: we may ask under which conditions two given morphisms \(f\) and \(g\) as in the diagram
\[
    \begin{tikzcd}[row sep = large]
        X \arrow[r, "k"] \arrow[rd, "f"'] & X \rtimes_\xi B \arrow[d, "\coop{f}{g}" description, dotted] & B \arrow[l, "e"'] \arrow[ld, "g"] \\
        & Y
    \end{tikzcd}
\]
induce a morphism \(\coop{f}{g}\) such that \(\coop{f}{g} \comp k = f\) and \(\coop{f}{g} \comp e = g\). It is clear that this condition---which agrees with the condition that \(f\) and~\(g\) commute in the ordinary sense of~\cite{Huq68,Bou02} when \(\xi\) is a trivial action---may be characterised in terms of equivariance of \(f\) and \(g\) with respect to the action \(\xi\) and the conjugation action of~\(X\) on itself (see, for instance, \cite[Proposition 4.4]{HL13}), but we here follow an alternative approach.

We may mimic the definition of the Higgins commutator as follows (though the Huq commutator would work equally well). We consider the solid part of the diagram below, where the top short exact sequence is induced by taking the kernel of the map \(\langle k,e \rangle\) out of the coproduct of \(X\) and \(B\).
\[
    \begin{tikzcd}[column sep = large]
        0 \arrow[r] & X \Cosmash_\xi B \arrow[r, "\cosmash{k}{e}", \nmono] \arrow[d, \repi, dotted] & X+B \arrow[r, "\CoindArr{k,e}"] \arrow[d, "\CoindArr{f,g}"'] & X \rtimes_\xi B \arrow[ld, "\coop{f}{g}", dashed] \arrow[r] & 0 \\
        & \Commut{f}{g}_\xi \arrow[r, \mono, dotted] & Y
    \end{tikzcd}
\]
This kernel \(X \diamond_\xi B\) is the \emph{twisted cosmash product} of \(X\) and \(B\), induced by \(\xi\); it ``consists of'' twisted formal commutator words in \(X\) and \(B\). The dotted arrows form the image factorisation of the composite \(\CoindArr{f,g} \comp \cosmash{k}{e}\), so that the dashed arrow \(\coop{f}{g}\) exists and satisfies \(\coop{f}{g} \comp k = f\) and \(\coop{f}{g} \comp e = g\) if and only if the thus defined commutator \(\Commut{f}{g}_\xi\) vanishes.

In \cref{sec:commutators}, we investigate this definition in detail, recalling known definitions and results, proving fundamental properties, comparing with related results in the literature. This is then applied in \cref{sec:XMod} where we characterise Janelidze's internal crossed modules~\cite{Jan03} in terms of twisted commutators. Our main result here is \cref{cor:charact cross mod combined} which states that in a semi-abelian category satisfying a certain condition denoted~\SH{}\@, if \(\del \from X \to B\) is an arrow and \(\xi\) an action of \(B\) on~\(X\) inducing the SSES~\eqref{diagr:SES xi}, then the couple \((\del,\xi)\) is an internal crossed module if and only if the twisted commutators \(\Commut{\del}{1_B}_\xi\) and \(\Commut{k}{e \comp \del}_{\conjact{X}{X}}\) vanish.

\section{Relative Commutativity and Commutators}
\label{sec:commutators}
In this section, we work towards the definition of a \emph{commutator twisted by an action}. We start by extending the definition of a commuting pair of arrows to commutativity relative to a chosen cospan and analysing the commutator that results from this. We then focus on the special case of the cospan of monomorphisms in a split extension, which will provide us with the commutator twisted by the action corresponding, through the semi-direct product equivalence, to the given split extension.

\subsection{Relative Commutativity}
In a pointed category with finite products, we say that two coterminal arrows \(f \from X \to Y\) and \(g \from B \to Y\) \emph{(Huq-)commute}~\cite{Huq68} or \emph{cooperate}~\cite[Definition~4.10]{Bou02}
\[
    \begin{tikzcd}[row sep = large]
        X \arrow[r, "{(1_X,0)}"] \arrow[rd, "f"'] & X \times B \arrow[d, "\coop{f}{g}" description, dotted] & B \arrow[l, "{(0,1_B)}"'] \arrow[ld, "g"] \\
        & Y
    \end{tikzcd}
\]
whenever there exists an arrow \({\coop{f}{g} \from X \times B \to Y}\), called a \emph{cooperator of \(f\) and \(g\)}, such that \(\coop{f}{g} \comp (1_X,0) = f\) and \(\coop{f}{g} \comp (0,1_B) = g\).

The goal of this subsection is to generalise this notion of commutativity by replacing the arrows \((1_X,0)\) and \((0,1_B)\) in the definition above by an arbitrary cospan. We will also show that we keep some of the properties of the classical situation. Note that the following notion has already been explored by Martins-Ferreira under the name of \emph{admissible pair} in \cite{Mar10}.

\begin{definition}
    \label{def:rel commutativity}
    Consider a cospan \begin{tikzcd}[cramped, sep=small]
        X \arrow[r, "k"] & A & B \arrow[l, "s"']
    \end{tikzcd} in an arbitrary category. We say that two arrows \({f \from X \to Y}\) and \(g \from B \to Y\) with the same codomain \emph{\((k,s)\)-commute} or \emph{commute relatively to \((k,s)\)} whenever there exists an arrow \(\coop{f}{g} \from A \to Y\) such that \(\coop{f}{g} \comp k = f\) and \(\coop{f}{g} \comp s = g\).
    \[
        \begin{tikzcd}[row sep = large]
            X \arrow[r, "k"] \arrow[rd, "f"'] & A \arrow[d, "\coop{f}{g}" description, dotted] & B \arrow[l, "s"'] \arrow[ld, "g"] \\
            & Y
        \end{tikzcd}
    \]
    This arrow \(\coop{f}{g}\) is called a \emph{\((k,s)\)-cooperator of \(f\) and \(g\)}. When there is no ambiguity, we will drop the \((k,s)\).
\end{definition}

\begin{examples}
    \label{exs:commutativity}
    Here are some examples of commutativity relative to some particular cospans.
    \begin{itemize}
        \item If \(A\) is a zero object, then two arrows commute relatively to the cospan \begin{tikzcd}[cramped, sep=small]
                  X \arrow[r] & 0 & B \arrow[l]
              \end{tikzcd} if and only if both of them are trivial arrows.
              \[
                  \begin{tikzcd}[row sep = large]
                      X \arrow[r] \arrow[rd, "f"'] & 0 \arrow[d, dotted] & B \arrow[l] \arrow[ld, "g"] \\
                      & Y
                  \end{tikzcd}
              \]

        \item Let \(B = A = X\) and \(k = s = 1_X\). Then two arrows \((k,s)\)-commute if and only if they are equal.
              \[
                  \begin{tikzcd}[row sep = large]
                      A \arrow[r, equal] \arrow[rd, "f"'] & A \arrow[d, "\coop{f}{g}" description, dotted] & A \arrow[l, equal] \arrow[ld, "g"] \\
                      & Y
                  \end{tikzcd}
              \]

        \item \emph{Subtraction structures} were introduced in \cite{BJ09} and we will show here that this notion can be expressed as a commutativity condition. More generally, we say that an arrow \(f \from X \to Y\) in a pointed category \emph{admits a subtractor along \(g \from Y \to Z\)}~\cite[Section~1]{Sha23} whenever there is an arrow \(\varphi \from Y \times X \to Z\), called a \emph{subtractor of \(f\) along \(g\)}, making
              \[
                  \begin{tikzcd}[row sep = large]
                      Y \arrow[r, "{(1_Y,0)}"] \arrow[rd, "g"'] & Y \times X \arrow[d, "\varphi" description, dotted] & X \arrow[l, "{(f,1_X)}"'] \arrow[ld, "0"] \\
                      & Z
                  \end{tikzcd}
              \]
              commute---i.e.\ if \(0\) and \(g\) commute relatively to \begin{tikzcd}[cramped]
                  Y \arrow[r, "{(1_Y,0)}"] & Y \times X & X \arrow[l, "{(f,1_X)}"']
              \end{tikzcd} with \(\varphi\) being a cooperator. If \(g\) is the identity on \(Y\), we simply say that \(\varphi\) is a \emph{subtractor of \(f\)}.
    \end{itemize}
\end{examples}

\begin{remark}
    \label{rem:unital}
    In what follows, we will generally work in a finitely complete context and consider the commutativity relative to an \emph{extremally epic} cospan, i.e.\ a cospan \begin{tikzcd}[cramped, sep=small]
        X \arrow[r, "k"] & A & B \arrow[l, "s"']
    \end{tikzcd} such that, if there exists a factorisation
    \[
        \begin{tikzcd}
            & M \arrow[d, "m", \mono] \\
            X \arrow[r, "k"'] \arrow[ru, "k'"] & A & B \arrow[l, "s"] \arrow[lu, "s'"']
        \end{tikzcd}
    \]
    of this cospan through a monomorphism \(m \from M \to A\), then this \(m\) is necessarily an isomorphism. In particular, this implies that this cospan is epic~\cite[Proposition~A.4.3]{BB04} so, in this case, the cooperator is necessarily unique. (For example, this is the case for the classical commutativity in a \emph{unital} context, i.e.\ in a pointed finitely complete category where all cospans of the form \begin{tikzcd}[cramped]
        X \arrow[r, "{(1_X,0)}"] & X \times B & B \arrow[l, "{(0,1_B)}"']
    \end{tikzcd} are extremally epic.)
\end{remark}

Before going to the next subsection where we introduce commutators, let us show some composition and cancellation properties of this relative commutativity (see \cite[Section~1.3]{BB04} for analogous results for classical commutativity).

\begin{proposition}
    \label{prop:postcomp commutativity}
    Consider the following diagram.
    \[
        \begin{tikzcd}[row sep = small]
            X \arrow[r, "k"] \arrow[rd, "f"'] \arrow[rdd, "h \comp f"', bend right] & A & B \arrow[l, "s"'] \arrow[ld, "g"] \arrow[ldd, "h \comp g", bend left] \\
            & Y \arrow[d, "h"] \\
            & Z
        \end{tikzcd}
    \]
    If \(f\) and \(g\) commute relatively to \((k,s)\), then \(h \comp f\) and \(h \comp g\) also commute. Moreover, the converse holds in a finitely complete context if \(h\) is monic and the cospan is extremally epic. The link between their respective cooperators is given by \(\coop{h \comp f}{h \comp g} = h \comp \coop{f}{g}\).
\end{proposition}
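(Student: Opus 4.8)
The plan is to treat the two implications separately: the forward one is immediate, while the converse is where the hypotheses on \(h\) and on the cospan come into play.

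For the forward implication, suppose \(f\) and \(g\) commute relatively to \((k,s)\), witnessed by a cooperator \(\coop{f}{g} \from A \to Y\). Then \(h \comp \coop{f}{g} \from A \to Z\) satisfies \((h \comp \coop{f}{g}) \comp k = h \comp (\coop{f}{g} \comp k) = h \comp f\) and likewise \((h \comp \coop{f}{g}) \comp s = h \comp g\), so it is a \((k,s)\)-cooperator of \(h \comp f\) and \(h \comp g\). This proves that \(h \comp f\) and \(h \comp g\) commute and, at the same time, exhibits the relation between the cooperators: since an extremally epic cospan is epic by \cref{rem:unital}, cooperators are unique, so \(\coop{h \comp f}{h \comp g}\) must equal the cooperator \(h \comp \coop{f}{g}\) just produced.

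For the converse, assume \(h\) is monic, the cospan \((k,s)\) is extremally epic, and \(h \comp f\) and \(h \comp g\) commute relatively to \((k,s)\), with cooperator \(\psi \from A \to Z\) satisfying \(\psi \comp k = h \comp f\) and \(\psi \comp s = h \comp g\). The whole point is to factor \(\psi\) through the monomorphism \(h\). To force this, I would form the pullback of \(h\) along \(\psi\),
\[
    \begin{tikzcd}
        P \arrow[r, "p"] \arrow[d, "m"'] & Y \arrow[d, "h"] \\
        A \arrow[r, "\psi"'] & Z
    \end{tikzcd}
\]
and note that \(m\) is monic, being a pullback of the monomorphism \(h\). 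The identities \(\psi \comp k = h \comp f\) and \(\psi \comp s = h \comp g\) say precisely that \((k,f)\) and \((s,g)\) are cones over the cospan defining \(P\), so the universal property provides \(k' \from X \to P\) and \(s' \from B \to P\) with \(m \comp k' = k\), \(p \comp k' = f\), \(m \comp s' = s\), and \(p \comp s' = g\). Hence the original cospan factors through the monomorphism \(m\), and the extremally epic hypothesis forces \(m\) to be an isomorphism. Setting \(\coop{f}{g} \Def p \comp m^{-1}\) then yields \(\coop{f}{g} \comp k = p \comp k' = f\) and \(\coop{f}{g} \comp s = p \comp s' = g\), so \(f\) and \(g\) indeed commute relatively to \((k,s)\).

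The only real obstacle is the converse, and it reduces to a single idea: the extremally epic property is exactly the lever that turns \enquote{the cospan factors through a mono} into \enquote{that mono is invertible}, which is what lets us descend the cooperator \(\psi\) back along \(h\). The pullback is merely the device that produces the relevant monomorphism from \(\psi\) and verifies the factorisation; once it is in place, the rest is formal.
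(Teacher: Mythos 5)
Your proof is correct and takes essentially the same route as the paper: where the paper invokes \cite[Proposition~A.4.3]{BB04} to obtain a diagonal fill-in of the two commutative squares against the monomorphism \(h\), you simply inline the standard proof of that lemma, pulling \(h\) back along the cooperator \(\psi\) and using extremal epicness of the cospan to invert the resulting monomorphism \(m\). The forward direction and the identification \(\coop{h \comp f}{h \comp g} = h \comp \coop{f}{g}\) via uniqueness of cooperators also match the paper's argument.
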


\begin{proof}
    The first statement is easily proved by checking the given formula for the cooperators. Thus, let us show the second statement. Assume that \(h\) is monic and \(h \comp f\) and \(h \comp g\) \((k,s)\)-commute. We will show that the cooperator \(\coop{h \comp f}{h \comp g}\) factorises through \(h\) as \(\coop{h \comp f}{h \comp g} = h \comp \varphi\). Since \(h\) is monic, this will imply that \(\varphi\) is a cooperator for \(f\) and \(g\), finishing the proof. Consider the following commutative squares.
    \[
        \begin{tikzcd}
            X \arrow[r, "k"] \arrow[d, "f"'] & A \arrow[d, "\coop{h \comp f}{h \comp g}"] && B \arrow[r, "s"] \arrow[d, "g"'] & A \arrow[d, "\coop{h \comp f}{h \comp g}"] \\
            Y \arrow[r, "h"', \mono] & Z && Y \arrow[r, "h"', \mono] & Z
        \end{tikzcd}
    \]
    By \cite[Proposition~A.4.3]{BB04}, there is a diagonal arrow \(\varphi \from A \to Y\) making the four triangles commute, which is the cooperator of \(f\) and \(g\) as desired.
\end{proof}

\begin{proposition}
    \label{prop:precomp commutativity}
    Consider the following diagram.
    \[
        \begin{tikzcd}
            X' \arrow[r, "k'"] \arrow[d, "x"'] & A' & B' \arrow[l, "s'"'] \arrow[d, "y"] \\
            X \arrow[r, "k"] \arrow[rd, "f"'] & A & B \arrow[l, "s"'] \arrow[ld, "g"] \\
            & Y
        \end{tikzcd}
    \]
    If \(k \comp x\) and \(s \comp y\) commute relatively to \((k',s')\) and \(f\) and \(g\) commute relatively to \((k,s)\), then \(f \comp x\) and \(g \comp y\) commute relatively to  \((k',s')\) with the cooperator \(\coop{f \comp x}{g \comp y} = \coop{f}{g} \comp \coop{k \comp x}{s \comp y}\).
    \[
        \begin{tikzcd}
            X' \arrow[r, "k'"] \arrow[d, "x"'] & A' \arrow[d, "\coop{k \comp x}{s \comp y}" description, dotted] & B' \arrow[l, "s'"'] \arrow[d, "y"] \\
            X \arrow[r, "k"] \arrow[rd, "f"'] & A \arrow[d, "\coop{f}{g}" description, dotted] & B \arrow[l, "s"'] \arrow[ld, "g"] \\
            & Y
        \end{tikzcd}
    \]
\end{proposition}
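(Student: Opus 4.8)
The plan is to verify directly that the composite displayed in the statement is a \((k',s')\)-cooperator of \(f \comp x\) and \(g \comp y\). Since relative commutativity (\cref{def:rel commutativity}) requires only the \emph{existence} of a cooperator, exhibiting this one candidate suffices, and no separate uniqueness argument is needed. First I would record that the two hypotheses furnish arrows \(\coop{k \comp x}{s \comp y} \from A' \to A\) and \(\coop{f}{g} \from A \to Y\): note that \(k \comp x\) and \(s \comp y\) are both coterminal into \(A\), so the first cooperator lands in \(A\), which is exactly the domain of the second. Hence the composite \(\coop{f}{g} \comp \coop{k \comp x}{s \comp y} \from A' \to Y\) is well defined, and it is this arrow we take as the proposed \(\coop{f \comp x}{g \comp y}\).

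The core of the argument is then a short diagram chase using associativity of composition together with the four defining equations of the given cooperators (namely \(\coop{k \comp x}{s \comp y} \comp k' = k \comp x\), \(\coop{k \comp x}{s \comp y} \comp s' = s \comp y\), \(\coop{f}{g} \comp k = f\), and \(\coop{f}{g} \comp s = g\)). For the \(k'\)-leg I would compute
\[
    (\coop{f}{g} \comp \coop{k \comp x}{s \comp y}) \comp k' = \coop{f}{g} \comp (k \comp x) = (\coop{f}{g} \comp k) \comp x = f \comp x,
\]
and symmetrically, for the \(s'\)-leg,
\[
    (\coop{f}{g} \comp \coop{k \comp x}{s \comp y}) \comp s' = \coop{f}{g} \comp (s \comp y) = (\coop{f}{g} \comp s) \comp y = g \comp y.
\]
These two identities are precisely the conditions for \(\coop{f}{g} \comp \coop{k \comp x}{s \comp y}\) to be a \((k',s')\)-cooperator of \(f \comp x\) and \(g \comp y\), which completes the verification.

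There is essentially no obstacle here: the statement is purely diagrammatic and holds in an arbitrary category, so the proof reduces to the two reassociation computations above. The only point worth a moment's care is matching the codomain of the inner cooperator with the domain of the outer one, which is why I begin by identifying that both \(k \comp x\) and \(s \comp y\) target \(A\). Since the proposition assumes none of the finite-completeness or extremal-epi conditions of \cref{rem:unital}, I would deliberately refrain from asserting that this cooperator is canonical, and claim only that the composite exhibited in the statement is \emph{a} valid cooperator.
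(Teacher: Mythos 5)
Your proof is correct and follows exactly the route the paper intends: the paper's own proof is the one-line remark that the result ``follows directly from the formula for the cooperators,'' i.e.\ precisely the two associativity computations you carry out showing that \(\coop{f}{g} \comp \coop{k \comp x}{s \comp y}\) satisfies the defining equations of a \((k',s')\)-cooperator of \(f \comp x\) and \(g \comp y\). Your additional remarks---that only existence is required and that no completeness or extremal-epi hypotheses are needed---are accurate and consistent with the paper's setting.
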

\begin{proof}
    Again, this follows directly from the formula for the cooperators.
\end{proof}

\begin{remark}
    Let us remark that, in the classical case, the Huq-commutativity condition for \(k \comp x\) and \(s \comp y\) is automatically satisfied, their cooperator being the induced arrow \(x \times y \from X' \times B' \to X \times B\).
\end{remark}

\begin{remark}
    A left-cancellation property analogous to \cite[Proposition~1.6.4]{BB04} might also make sense, but the necessary assumptions are less straightforward and we will not need such a result in this paper.
\end{remark}

\subsection{Relative Commutators}
We will now define a notion of \emph{relative commutator} which, as in the classical case, is a tool that allows us to detect when two arrows commute in the relative sense, or more generally measure ``how close they are to commuting'' when this is not the case. In this subsection, we work in the context of a \emph{normal} category \(\C\), i.e.\ a pointed regular category where every regular epimorphism is a normal epimorphism. We will construct the relative commutator via an image factorisation, so we need the regularity, while  normality is needed to show that a certain regular epimorphism is the cokernel of its kernel, in order to ensure the correspondence between commutators and commutativity (see \cref{prop:rel commutativity}).

As a reminder, in a unital normal category, the \emph{(Higgins) commutator \(\Commut{f}{g}\)} of two arrows \(f \from X \to Y\), \(g \from B \to Y\) with the same codomain~\cites[Definition~5.1]{MM10b}{Hig56} is the image
\[
    \begin{tikzcd}[column sep = large]
        0 \arrow[r] & X \Cosmash B \arrow[r, "\cosmash{X}{B}", \nmono] \arrow[d, \repi, dotted] & X+B \arrow[rr, "\CoindArr{(1_X,0),(0,1_B)}"] \arrow[d, "\CoindArr{f,g}"] && X \times B \\
        & \Commut{f}{g} \arrow[r, \mono, dotted] & Y
    \end{tikzcd}
\]
of the composite \(\CoindArr{f,g} \comp \cosmash{X}{B}\) where \(\cosmash{X}{B}\) is the kernel of the canonical arrow \(\CoindArr{(1_X,0),(0,1_B)} \from X+B \to X \times B\). This commutator was designed to vanish exactly when the arrows \(f\) and \(g\) commute.

We will use a similar procedure to define the commutator relative to a cospan---which we will require to be extremally epic, in order to recover the detection of commutativity; note that it was for the same reason that we gave the definition of the Higgins commutator in a unital context. Similarly to \cref{def:rel commutativity}, the idea is to replace the arrows \((1_X,0)\) and \((0,1_B)\) in the diagram above by an arbitrary extremally epic cospan.

\begin{definition}
    \label{def:rel commutator}
    Let \begin{tikzcd}[cramped, sep=small]
        X \arrow[r, "k"] & A & B \arrow[l, "s"']
    \end{tikzcd} be an extremally epic cospan and \({f \from X \to Y}\), \({g \from B \to Y}\) two arrows with the same codomain. The \emph{\((k,s)\)-commutator \(\Commut{f}{g}_{k,s}\)} of \(f\) and \(g\) is the image of the composite \(\CoindArr{f,g} \comp \cosmash{k}{s}\) where \(\cosmash{k}{s}\) is the kernel of the arrow \(\CoindArr{k,s} \from X+B \to A\). By convention and by coherence with the classical situation, we write \(\cosmash{X}{B}\) for \(\cosmash{(1_X,0)}{(0,1_B)}\). The kernel object of \(\CoindArr{k,s} \from X+B \to A\) is called the \emph{\((k,s)\)-cosmash product} of \(X\) and \(B\) and is denoted \(X \Cosmash_{k,s} B\).
    \[
        \begin{tikzcd}[column sep = large]
            0 \arrow[r] & X \Cosmash_{k,s} B \arrow[r, "\cosmash{k}{s}", \nmono] \arrow[d, \repi, dotted] & X+B \arrow[r, "\CoindArr{k,s}"] \arrow[d, "\CoindArr{f,g}"] & A \\
            & \Commut{f}{g}_{k,s} \arrow[r, \mono, dotted] & Y
        \end{tikzcd}
    \]
\end{definition}

\begin{remark}
    The twisted cosmash product appears in independent recent work of Duvieusart: see~\cite{DuvCT2023}.
\end{remark}

Before we show that these relative commutators detect the relative commutativity, here is a result that tells us that the commutators are preserved by direct images. For an analogous result for the classical commutator, see \cite[Lemma~3.4]{Sha17}. The proof given there is less simple than ours, since it concerns commutators of subobjects, which apparently introduces some technicalities.

\begin{proposition}
    Given the diagram
    \[
        \begin{tikzcd}
            X \arrow[r, "k"] \arrow[rd, "f"'] & A & B \arrow[l, "s"'] \arrow[ld, "g"] \\
            & Y \arrow[d, "h"] \\
            & Z
        \end{tikzcd}
    \]
    with \begin{tikzcd}[cramped, sep=small]
        X \arrow[r, "k"] & A & B \arrow[l, "s"']
    \end{tikzcd} extremally epic, we have the formula
    \[
        \Commut{h \comp f}{h \comp g}_{k,s} = h(\Commut{f}{g}_{k,s})
    \]
    where the latter object is the direct image of \(\Commut{f}{g}_{k,s}\) along \(h\).
\end{proposition}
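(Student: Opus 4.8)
The plan is to reduce the statement to one standard property of images in a regular category, namely that the image of a composite $h \comp \phi$ coincides with the direct image along $h$ of the image of $\phi$. First I would record the elementary identity $\CoindArr{h \comp f, h \comp g} = h \comp \CoindArr{f, g}$, which follows at once from the universal property of the coproduct $X + B$: both sides restrict to $h \comp f$ along the first coprojection and to $h \comp g$ along the second. Composing on the right with the kernel inclusion $\cosmash{k}{s}$ then yields
\[
    \CoindArr{h \comp f, h \comp g} \comp \cosmash{k}{s} = h \comp \bigl( \CoindArr{f, g} \comp \cosmash{k}{s} \bigr),
\]
so that, by \cref{def:rel commutator}, the object $\Commut{h \comp f}{h \comp g}_{k,s}$ is precisely the image of $h$ post-composed with the map whose image is $\Commut{f}{g}_{k,s}$.

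It therefore remains to establish the general fact: if $\phi \from W \to Y$ has image factorisation $\phi = m \comp p$, with $p$ a regular epimorphism and $m \from \img \phi \to Y$ a monomorphism, then the image of $h \comp \phi$ is the direct image $h(\img \phi)$, i.e.\ the image of $h \comp m$. I would apply this with $\phi \Def \CoindArr{f, g} \comp \cosmash{k}{s}$, whose image is $\Commut{f}{g}_{k,s}$ by definition, to conclude $\Commut{h \comp f}{h \comp g}_{k,s} = h(\Commut{f}{g}_{k,s})$.

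To prove the general fact, take the image factorisation $h \comp m = m' \comp p'$ with $p'$ a regular epimorphism and $m'$ a monomorphism; by definition $m'$ represents $h(\img \phi)$. Then
\[
    h \comp \phi = h \comp m \comp p = m' \comp (p' \comp p),
\]
with $m'$ monic. The crux is to observe that $p' \comp p$ is again a regular epimorphism: this is exactly where regularity of $\C$ enters, since regular epimorphisms compose in a regular category. (Note that normality plays no role here; only the regular image factorisations are needed.) Granting this, $m' \comp (p' \comp p)$ is an image factorisation of $h \comp \phi$, whence $\img(h \comp \phi) = m' = h(\img \phi)$.

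The only genuine obstacle is this closure of regular epimorphisms under composition; everything else is a direct unwinding of \cref{def:rel commutator} together with the universal property of the coproduct. I would either cite this standard property of regular categories or, if a self-contained argument is preferred, recall its short proof via the pullback-stability of regular epimorphisms.
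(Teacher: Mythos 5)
Your proposal is correct and matches the paper's own proof: the paper likewise stacks the image factorisation of \(\CoindArr{f,g} \comp \cosmash{k}{s}\) with that of the resulting monomorphism composed with \(h\), and concludes by the essential uniqueness of image factorisations together with the fact that regular epimorphisms compose in a regular category. Your explicit note that \(\CoindArr{h \comp f, h \comp g} = h \comp \CoindArr{f,g}\) and that normality is not needed merely spells out what the paper leaves implicit.
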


\begin{proof}
    The formula follows from the diagram
    \[
        \begin{tikzcd}
            X \Cosmash_{k,s} B \arrow[r, \nmono] \arrow[d, \repi, dotted] & X+B \arrow[d, "\CoindArr{f,g}"] \\
            \Commut{f}{g}_{k,s} \arrow[r, \mono, dotted] \arrow[d, \repi, dashed] & Y \arrow[d, "h"] \\
            h(\Commut{f}{g}_{k,s}) \arrow[r, \mono, dashed] & Z
        \end{tikzcd}
    \]
    ---where the dotted and dashed arrows are constructed as the image factorisation of the corresponding squares---by the essential uniqueness of the image factorisation and the fact that, in a regular context, regular epimorphisms compose~\cite[Corollary~A.5.4]{BB04}.
\end{proof}

Let us observe how these commutators detect commutativity relative to an extremally epic cospan.

\begin{proposition}
    \label{prop:rel commutativity}
    Two arrows \(f \from X \to Y\) and \(g \from B \to Y\) commute relatively to an extremally epic cospan \begin{tikzcd}[cramped, sep=small]
        X \arrow[r, "k"] & A & B \arrow[l, "s"']
    \end{tikzcd} if and only if their \((k,s)\)-commutator \(\Commut{f}{g}_{k,s}\) is trivial.
\end{proposition}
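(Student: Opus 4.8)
The plan is to collapse both conditions of the equivalence onto a single factorisation problem for the comparison map $\CoindArr{f,g} \from X+B \to Y$. The first observation is that, by the universal property of the coproduct, producing a $(k,s)$-cooperator $\coop{f}{g} \from A \to Y$ with $\coop{f}{g} \comp k = f$ and $\coop{f}{g} \comp s = g$ is precisely the same as factoring $\CoindArr{f,g}$ through $\CoindArr{k,s} \from X+B \to A$, i.e.\ writing $\CoindArr{f,g} = \coop{f}{g} \comp \CoindArr{k,s}$. Thus the statement to be proved becomes: such a factorisation exists if and only if the image of $\CoindArr{f,g} \comp \cosmash{k}{s}$ is trivial.

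For the forward direction, assume a cooperator exists. Since $\cosmash{k}{s}$ is by definition the kernel of $\CoindArr{k,s}$, we obtain
\[
    \CoindArr{f,g} \comp \cosmash{k}{s} = \coop{f}{g} \comp \CoindArr{k,s} \comp \cosmash{k}{s} = 0,
\]
so the map whose image defines $\Commut{f}{g}_{k,s}$ is the zero map and the commutator is trivial. This implication uses nothing beyond the definitions.

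For the converse, the task is to upgrade the vanishing of the commutator into the desired factorisation, and this is where the standing hypotheses enter. Triviality of $\Commut{f}{g}_{k,s}$ says exactly that $\CoindArr{f,g} \comp \cosmash{k}{s} = 0$, i.e.\ that $\CoindArr{f,g}$ annihilates the kernel of $\CoindArr{k,s}$. I would then argue that $\CoindArr{k,s}$ is a normal epimorphism: because the cospan is extremally epic, any monomorphism through which $\CoindArr{k,s}$ factors is an isomorphism (the cospan factoring through it via the coproduct inclusions), so $\CoindArr{k,s}$ is an extremal epimorphism; in a regular category this makes it a regular epimorphism; and since $\C$ is normal, every regular epimorphism is the cokernel of its own kernel. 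Hence $\CoindArr{k,s}$ is the cokernel of $\cosmash{k}{s}$, and its universal property yields a unique $\coop{f}{g} \from A \to Y$ with $\coop{f}{g} \comp \CoindArr{k,s} = \CoindArr{f,g}$; precomposing with the two coproduct inclusions then gives $\coop{f}{g} \comp k = f$ and $\coop{f}{g} \comp s = g$, as wanted.

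The main obstacle is precisely this identification of $\CoindArr{k,s}$ with the cokernel of $\cosmash{k}{s}$: it is what forces both running assumptions into play, the extremal epi-ness of the cospan (to make $\CoindArr{k,s}$ a regular epimorphism) and the normality of $\C$ (to pass from a regular epimorphism to the cokernel of its kernel). Without regularity there is no image factorisation to define the commutator in the first place, and without normality a regular epimorphism need not be the cokernel of its kernel, so the factorisation could genuinely fail even when the commutator vanishes.
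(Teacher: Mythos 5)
Your proof is correct and follows essentially the same route as the paper's: identify a cooperator with a factorisation of \(\CoindArr{f,g}\) through \(\CoindArr{k,s}\), note that the extremally epic cospan makes \(\CoindArr{k,s}\) an extremal (hence regular, hence normal) epimorphism, and use that it is the cokernel of \(\cosmash{k}{s}\) for the converse. The only cosmetic difference is that you spell out the extremal-epi argument and the role of normality where the paper cites \cite[Proposition~A.4.3, Corollary~A.5.4]{BB04}.
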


\begin{proof}
    First, since \(k\) and \(s\) are jointly extremally epic, \(\CoindArr{k,s}\) is extremally epic so it is a regular epimorphism by regularity~\cite[Proposition~A.4.3, Corollary~A.5.4]{BB04}. Thus, since we are in a normal context, this arrow is a normal epimorphism; in particular, it is the cokernel of its kernel \(\cosmash{k}{s}\). What we need to prove now follows from the universal property of this cokernel \(\CoindArr{k,s}\).
    \[
        \begin{tikzcd}[column sep = large]
            0 \arrow[r] & X \Cosmash_{k,s} B \arrow[r, "\cosmash{k}{s}", \nmono] \arrow[d, \repi, dotted] & X+B \arrow[r, "\CoindArr{k,s}"] \arrow[d, "\CoindArr{f,g}"'] & A \arrow[ld, "\coop{f}{g}", dashed] \arrow[r] & 0 \\
            & \Commut{f}{g}_{k,s} \arrow[r, \mono, dotted] & Y
        \end{tikzcd}
    \]

    If \(f\) and \(g\) commute relatively to \((k,s)\), we have an arrow \(\coop{f}{g}\) such that
    \[
        \coop{f}{g} \comp \CoindArr{k,s} = \CoindArr{\coop{f}{g} \comp k,\coop{f}{g} \comp s} = \CoindArr{f,g}\text{.}
    \]
    Therefore, we have
    \[
        \CoindArr{f,g} \comp \cosmash{k}{s} = (\coop{f}{g} \comp \CoindArr{k,s}) \comp \cosmash{k}{s} = \coop{f}{g} \comp 0 = 0
    \]
    so its image \(\Commut{f}{g}_{k,s}\) is trivial.

    Conversely, if \(\Commut{f}{g}_{k,s} = 0\), then the composite \(\CoindArr{f,g} \comp \cosmash{k}{s}\) is trivial, so we can apply the universal property of \(\CoindArr{k,s}\) to obtain an arrow \(\coop{f}{g} \from A \to Y\) such that \(\coop{f}{g} \comp \CoindArr{k,s} = \CoindArr{f,g}\). Then \(\coop{f}{g} \comp k = f\) and \(\coop{f}{g} \comp s = g\), which finishes the proof.
\end{proof}

\begin{examples}
    Let us examine the commutators associated to some of the cospans of \cref{exs:commutativity}.
    \begin{itemize}
        \item If \(A\) is a zero object, then we can choose \(\cosmash{k}{s} = 1_{X+B}\) so our commutator \(\Commut{f}{g}_{k,s}\) is simply the image \(\Img\CoindArr{f,g}\) of \(\CoindArr{f,g}\).
              \[
                  \begin{tikzcd}[column sep = large]
                      0 \arrow[r] & X+B \arrow[r, equal] \arrow[d, \repi, dotted] & X+B \arrow[r] \arrow[d, "\CoindArr{f,g}"] & 0 \\
                      & \Img\CoindArr{f,g} \arrow[r, \mono, dotted] & Y
                  \end{tikzcd}
              \]

        \item Let \(B = A = X\) and \(k = s = 1_X\). The \((1_X,1_X)\)-cosmash product is called the \emph{difference object \(D(X)\)} of \(X\) and its inclusion into \(X+X\) is denoted \(\delta_X\). We have that \(\Commut{f}{g}_{1_X,1_X}\) is trivial if and only if \(f = g\)---i.e.\ if they \((1_X,1_X)\)-commute, as follows from \cref{prop:rel commutativity}. So this commutator can be seen as a measure of the difference between \(f\) and \(g\).
              \[
                  \begin{tikzcd}[column sep = large]
                      0 \arrow[r] & D(X) \arrow[r, "\delta_X", \nmono] \arrow[d, \repi, dotted] & X+X \arrow[r, "\CoindArr{1_X,1_X}"] \arrow[d, "\CoindArr{f,g}"] & X \\
                      & \Commut{f}{g}_{1_X,1_X} \arrow[r, \mono, dotted] & Y
                  \end{tikzcd}
              \]
    \end{itemize}
\end{examples}

\subsection{Commutators Twisted by an Action}
\label{sec:twisted commut}
A special case of these relative commutators which we will be especially interested in consists of those commutators relative to a cospan induced by an internal action.

In what follows, whenever we consider internal actions, our category \(\C\) will always be semi-abelian in the sense of~\cite{JMT02}. Such a category is indeed normal~\cite[Proposition~3.1.23]{BB04}. Further note that any semi-abelian category is \emph{homological} in the sense of \cite[Definition~4.1.1]{BB04}---see \cite[Proposition~5.1.2]{BB04}---and unital~\cite[Proposition~3.1.18, Proposition~1.8.4]{BB04}---cf.\ \cref{rem:unital}.

Let us recall what is an internal action. An \emph{(internal) action} of an object \(B\) on an object \(X\) is an arrow \(\xi \from B \flat X \to X\) making certain diagrams commute, where \(B \flat X\) is constructed by choosing a coproduct \(B+X\) of \(B\) and \(X\) then taking a kernel \(\kerb{B}{X} \from B \flat X \to B+X\) of the arrow \(\CoindArr{1_B,0} \from B+X \to B\)~\cites[Sections~3.2--3.3]{BJK05}{BJ98}. (This construction gives us a bifunctor \(\flat \from \C \times \C \to \C\).) Such an internal action induces a split short exact sequence (SSES)
\begin{eqdiagr}
    \label{diagr:SES xi p}
    \begin{tikzcd}
        0 \arrow[r] & X \arrow[r, "k"] & X \rtimes_\xi B \arrow[r, "p", shift left] & B \arrow[l, "s", shift left] \arrow[r] & 0
    \end{tikzcd}
\end{eqdiagr}
as in the commutative diagram
\begin{eqdiagr}
    \label{diagr:induced SSES}
    \begin{tikzcd}
        0 \arrow[r] & B \flat X \arrow[r, "\kerb{B}{X}"] \arrow[d, "\xi"'] & B+X \arrow[r, "\CoindArr{1_B,0}", shift left] \arrow[d, "q",  \repi] & B \arrow[l, "\iota_1", shift left] \arrow[d, equal] \\
        0 \arrow[r, dashed] & X \arrow[r, "k = q \comp \iota_2"', dashed] \arrow[ru, "\iota_2"] & X \rtimes_\xi B \arrow[r, "p", dashed, shift left] & B \arrow[l, "s = q \comp \iota_1", dashed, shift left] \arrow[r, dashed] & 0
    \end{tikzcd}
\end{eqdiagr}
where \(\iota_1\), \(\iota_2\) are the inclusions in the coproduct \(B+X\) and \(q\) is constructed as a coequaliser of \(\kerb{B}{X}\) and \(\iota_2 \comp \xi\). This construction is part of an equivalence of categories between internal actions and SSESs~\cite[Section~3]{BJ98}. (In fact, even if \cite{BJ98} discusses an equivalence for each fixed acting object \(B\), these equivalences for every \(B\) induce an equivalence on the level of all internal actions and all SSESs.)

\begin{example}
    \label{ex:triv action}
    An action \(\xi\) whose induced SSES is of the form
    \[
        \begin{tikzcd}
            0 \arrow[r] & X \arrow[r, "{(1_X,0)}"] & X \times B \arrow[r, "\pi_2", shift left] & B \arrow[l, "{(0,1_B)}", shift left] \arrow[r] & 0
        \end{tikzcd}
    \]
    (where \(\pi_2\) is the projection of \(X \times B\) on \(B\)) is said to be a \emph{trivial action}. More explicitly, we have \(\xi = \CoindArr{0,1_X} \comp \kerb{B}{X}\).
\end{example}

\begin{example}
    \label{ex:conj action}
    The \emph{conjugation action} \(\conjact{X}{X}\) of an object \(X\) on itself is defined as the action corresponding to the SSES
    \[
        \begin{tikzcd}
            0 \arrow[r] & X \arrow[r, "{(1_X,0)}"] & X \times X \arrow[r, "\pi_2", shift left] & X \arrow[l, "\diag_X", shift left] \arrow[r] & 0
        \end{tikzcd}
    \]
    where \(\pi_2\) is the projection on the second factor and \(\diag_X \Def (1_X,1_X)\) is the \emph{diagonal} of \(X\). Again, we have an explicit formula \(\conjact{X}{X} = \CoindArr{1_X,1_X} \comp \kerb{X}{X}\).
\end{example}

Let us now define commutators twisted by an action. In order to apply the definition of \((k,s)\)-commutators (\cref{def:rel commutator}), we only need that the cospan \begin{tikzcd}[cramped, sep=small]
    X \arrow[r, "k"] & X \rtimes_\xi B & B \arrow[l, "s"']
\end{tikzcd} is extremally epic, which is indeed the case since \(\C\) is a protomodular category (see \cite[Section~2.4]{JMT02}). Let us remark that for a cospan \((k,s)\) to be induced by an action is a property rather than additional structure. Indeed, whenever it exists, an arrow \(p \from A \to B\) turning this cospan into a SSES is uniquely determined by the identities \(p \comp k = 0\) and \(p \comp s = 1_B\) since \(k\) and \(s\) are jointly epic. If now \(k\) is the kernel of \(p\), then \(p\) is necessarily its cokernel. Note that such an arrow \(p\) is a \((k,s)\)-cooperator of \(0\) and \(1_B\) so the condition that the cospan \begin{tikzcd}[cramped, sep=small]
    X \arrow[r, "k"] & A & B \arrow[l, "s"']
\end{tikzcd} is induced by an action involves the \((k,s)\)-commutativity of \(0\) and~\(1_B\).

\begin{definitions}
    Let \(\xi \from B \flat X \to X\) be an action and~\eqref{diagr:SES xi p} its induced SSES\@. Let \(f \from X \to Y\), \(g \from B \to Y\) be two arrows with the same codomain.
    \begin{itemize}
        \item We say that \(f\) and \(g\) \emph{commute relatively to \(\xi\)} or \emph{\(\xi\)-commute} when they \((k,s)\)-commute. Their \((k,s)\)-cooperator is called the \emph{\(\xi\)-cooperator of \(f\) and \(g\)} (see \cref{def:rel commutativity}).
        \item The \emph{\(\xi\)-commutator of \(f\) and \(g\)} (or \emph{commutator twisted by the action \(\xi\)}) is the \((k,s)\)-commutator of \(f\) and \(g\) (see \cref{def:rel commutator}). We denote it by~\(\Commut{f}{g}_\xi\).
    \end{itemize}
\end{definitions}

\begin{remark}
    \label{rem:triv twisted commut}
    If \(\xi\) is a trivial action, then we regain the definition of the classical commutator: see \cref{ex:triv action}.
\end{remark}

\begin{example}
    Let us recall that an arrow \(f \from X \to Y\) \emph{admits a subtractor \(\varphi\) along \(g \from Y \to Z\)} (see \cref{exs:commutativity}) whenever \(g\) and \(0 \from X \to Z\) commute relatively to \begin{tikzcd}[cramped]
        Y \arrow[r, "{(1_Y,0)}"] & Y \times X & X \arrow[l, "{(f,1_X)}"']
    \end{tikzcd}.
    \[
        \begin{tikzcd}[row sep = large]
            Y \arrow[r, "{(1_Y,0)}"] \arrow[rd, "g"'] & Y \times X \arrow[d, "\varphi" description, dotted] & X \arrow[l, "{(f,1_X)}"'] \arrow[ld, "0"] \\
            & Z
        \end{tikzcd}
    \]
    In fact, we can turn this cospan into the SSES
    \[
        \begin{tikzcd}
            0 \arrow[r] & Y \arrow[r, "{(1_Y,0)}"] & Y \times X \arrow[r, "\pi_2", shift left] & X \arrow[l, "{(f,1_X)}", shift left] \arrow[r] & 0
        \end{tikzcd}
    \]
    so, in a semi-abelian context, this is an example of commutativity twisted by an action. In particular, if \(Y = X\) and \(f\) is the identity on \(X\), then we find the SSES associated with the conjugation action on \(X\): see \cref{ex:conj action}.
\end{example}

We end this section with an interpretation of commutativity twisted by an action in terms of equivariance. In the case of commutators twisted by an action \(\xi\), the relative commutativity of two arrows \(f\) and \(g\) (\cref{def:rel commutativity}) can also be expressed as the equivariance of \(f\) and \(g\) with respect to \(\xi\) and the conjugation action \(\conjact{Y}{Y}\) of~\(Y\) on itself. Note that this corresponds to \cite[Theorem~1.3]{Jan03} (since \(\conjact{Y}{Y} \comp (g \flat f) = \CoindArr{g,f} \comp \kerb{B}{X}\) by definition of \(\conjact{Y}{Y}\)) which provides an alternate proof of this fact. Recall that \(Y \rtimes_{\conjact{Y}{Y}} Y = Y \times Y\): see \cref{ex:conj action}.

\begin{lemma}
    \label{lemma:equivariance => commutativity}
    The arrows \(f \from X \to Y\) and \(g \from B \to Y\) commute relatively to \(\xi\) if and only if they are equivariant with respect to \(\xi\) and the conjugation action \(\conjact{Y}{Y}\) of \(Y\) on itself. The latter condition means that the square
    \[
        \begin{tikzcd}[squared]
            B \flat X \arrow[r, "\xi"] \arrow[d, "g \flat f"'] & X \arrow[d, "f"] \\
            Y \flat Y \arrow[r, "\conjact{Y}{Y}"'] & Y
        \end{tikzcd}
    \]
    commutes or, in terms of SSESs via the equivalence mentioned in the start of the subsection, there exists an arrow \(\psi \from X \rtimes_\xi B \to Y \times Y\) such that \((f,\psi,g)\)
    \[
        \begin{tikzcd}
            0 \arrow[r] & X \arrow[r, "k"] \arrow[d, "f"] & X \rtimes_\xi B \arrow[r, "p", shift left] \arrow[d, "\psi", dotted] & B \arrow[r] \arrow[l, "s", shift left] \arrow[d, "g"] & 0 \\
            0 \arrow[r] & Y \arrow[r, "{(1_Y,0)}"'] & Y \times Y \arrow[r, "\pi_2", shift left] & Y \arrow[l, "\diag_Y", shift left] \arrow[r] & 0
        \end{tikzcd}
    \]
    is a morphism of SSESs.
\end{lemma}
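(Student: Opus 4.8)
The plan is to establish the two formulations of equivariance in the statement---the commuting square and the morphism of SSESs---as equivalent for free, and then to prove the genuine content, namely the equivalence of either with $\xi$-commutativity, by a direct translation between cooperators and middle arrows. Indeed, a commuting equivariance square is exactly a morphism of internal actions from $\xi$ to the conjugation action $\conjact{Y}{Y}$, so the equivalence between actions and SSESs recalled at the start of the subsection carries it functorially to a morphism of the induced SSESs---whose middle component is the sought $\psi$---and conversely restricts any morphism of SSESs to a morphism of actions. Feeding in the SSES of $\conjact{Y}{Y}$ from \cref{ex:conj action} shows that this morphism of SSESs is precisely the diagram in the statement. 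It thus suffices to show that $f$ and $g$ $\xi$-commute if and only if an arrow $\psi$ completing $(f,\psi,g)$ to a morphism of SSESs exists.

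For the direction from a morphism of SSESs to $\xi$-commutativity, given such a $\psi \from X \rtimes_\xi B \to Y \times Y$ I would set $\coop{f}{g} \Def \pi_1 \comp \psi$, with $\pi_1$ the first projection of $Y \times Y$. Post-composing the two squares defining the SSES morphism with $\pi_1$ yields $\coop{f}{g} \comp k = \pi_1 \comp (1_Y,0) \comp f = f$ and $\coop{f}{g} \comp s = \pi_1 \comp \diag_Y \comp g = g$, exhibiting $\coop{f}{g}$ as a $(k,s)$-cooperator, so that $f$ and $g$ commute relatively to $\xi$.

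For the converse, starting from a cooperator $\coop{f}{g}$ with $\coop{f}{g} \comp k = f$ and $\coop{f}{g} \comp s = g$, I would define $\psi \Def (\coop{f}{g}, g \comp p) \from X \rtimes_\xi B \to Y \times Y$, where $p$ is the split epimorphism of~\eqref{diagr:SES xi p}. Then $p \comp k = 0$ gives $\psi \comp k = (f,0) = (1_Y,0) \comp f$; compatibility with the quotients, $\pi_2 \comp \psi = g \comp p$, holds by construction; and $p \comp s = 1_B$ gives $\psi \comp s = (g,g) = \diag_Y \comp g$. Hence $(f,\psi,g)$ is a morphism of SSESs.

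Since the assignments $\psi \mapsto \pi_1 \comp \psi$ and $\coop{f}{g} \mapsto (\coop{f}{g}, g \comp p)$ are mutually inverse, the argument is purely formal and I expect no serious obstacle beyond bookkeeping. The one point deserving attention is that a morphism of SSESs must respect the entire split structure and not merely the kernel and cokernel data: in the converse direction one must verify the square involving the sections $s$ and $\diag_Y$ alongside those for $k$ and $p$, which is exactly what forces the second component of $\psi$ to be $g \comp p$. Uniqueness of $\psi$, which follows from $k$ and $s$ being jointly epic, is not needed here.
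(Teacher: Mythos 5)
Your proof is correct and follows essentially the same route as the paper's: the paper also works from the point of view of SSESs and uses exactly your two formulas, \(\coop{f}{g} = \pi_1 \comp \psi\) in one direction and \(\psi = (\coop{f}{g},\, g \comp p)\) in the other, verifying the same identities. You merely spell out some checks the paper leaves as ``easily checked'', so no further comment is needed.
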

\begin{proof}
    We will show this result from the point of view of SSESs.

    If \((f,g)\) is equivariant, we easily check that \(\coop{f}{g} = \pi_1 \comp \psi\) satisfies the conditions of \cref{def:rel commutativity}.

    Conversely, let us check that \(\psi = (\coop{f}{g}, g \comp p)\) induces a morphism \((f,\psi,g)\) of SSESs, where \(\coop{f}{g}\) is given by the \((k,s)\)-commutativity of \(f\) and \(g\). The equalities \(\psi \comp k = (1_Y,0) \comp f\) and \(g \comp p = \pi_2 \comp \psi\) are immediate and, for the last one, we compute
    \[
        \psi \comp s = \bigl(\coop{f}{g} \comp s, (g \comp p) \comp s\bigr) = (g,g) = \diag_Y \comp g\text{,}
    \]
    which finishes the proof.
\end{proof}

We can also obtain a kind of converse of this result, i.e.\ a commutativity condition to characterise the equivariance of a pair of arrows. For that, we will need a lemma about internal actions, which tells us that, if we restrict the conjugation action of a semi-direct product to its factors, we recover the original action.

\begin{lemma}[{\cite[Corollary~4.5]{HL13}}]
    \label{lemma:conj semi-dir prod}
    Consider an internal action \(\xi \from B \flat X \to X\) and
    \[
        \tag{\ref*{diagr:SES xi}}
        \begin{tikzcd}
            0 \arrow[r] & X \arrow[r, "k"] & X \rtimes_\xi B \arrow[r, "d", shift left] & B \arrow[l, "e", shift left] \arrow[r] & 0
        \end{tikzcd}
    \]
    its induced SSES\@. Then the square
    \[
        \begin{tikzcd}[sep=huge]
            B \flat X \arrow[r, "\xi"] \arrow[d, "e \flat k"'] & X \arrow[d, "k"] \\
            (X \rtimes_\xi B) \flat (X \rtimes_\xi B) \arrow[r, "\conjact{X \rtimes_\xi B}{X \rtimes_\xi B}"'] & X \rtimes_\xi B
        \end{tikzcd}
    \]
    is commutative or, equivalently, there exists \(\psi \from X \rtimes_\xi B \to (X \rtimes_\xi B) \times (X \rtimes_\xi B)\) such that
    \[
        \begin{tikzcd}[sep=large]
            0 \arrow[r] & X \arrow[r, "k"] \arrow[d, "k"] & X \rtimes_\xi B \arrow[r, "d", shift left] \arrow[d, "\psi", dotted] & B \arrow[r] \arrow[l, "e", shift left] \arrow[d, "e"] & 0 \\
            0 \arrow[r] & X \rtimes_\xi B \arrow[r, "{(1_{X \rtimes_\xi B},0)}"'] & (X \rtimes_\xi B) \times (X \rtimes_\xi B) \arrow[r, "\pi_2", shift left] & Y \arrow[l, "\diag_{X \rtimes_\xi B}", shift left] \arrow[r] & 0
        \end{tikzcd}
    \]
    is a morphism of SSESs.
\end{lemma}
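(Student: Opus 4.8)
The plan is to read the square in the statement as the very equivariance condition already treated in \cref{lemma:equivariance => commutativity}, and then to dispatch that condition by exhibiting a cooperator which is simply an identity arrow. Applying \cref{lemma:equivariance => commutativity} with codomain \(Y = X \rtimes_\xi B\) and with the pair \(f = k\), \(g = e\), the equivariance square displayed there becomes verbatim the square of the present statement: its left leg is \(e \flat k\), its right leg is \(k\), its top is \(\xi\), and its bottom is \(\conjact{X \rtimes_\xi B}{X \rtimes_\xi B}\). That lemma therefore tells us that the square commutes if and only if \(k\) and \(e\) commute relatively to \(\xi\), and moreover that this relative commutativity is equivalent to the existence of a \(\psi\) turning \((k,\psi,e)\) into a morphism of SSESs. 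So it suffices to verify the relative commutativity of \(k\) and \(e\).

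This last point is immediate. The cospan induced by \(\xi\) is \begin{tikzcd}[cramped, sep=small] X \arrow[r, "k"] & X \rtimes_\xi B & B \arrow[l, "e"'] \end{tikzcd}, and relative to this very cospan the arrows \(k\) and \(e\) cooperate tautologically: the identity \(1_{X \rtimes_\xi B}\) is a \((k,e)\)-cooperator, since \(1_{X \rtimes_\xi B} \comp k = k\) and \(1_{X \rtimes_\xi B} \comp e = e\) (\cref{def:rel commutativity}). Threading this cooperator through the construction used in the proof of \cref{lemma:equivariance => commutativity} produces the explicit comparison map \(\psi = (1_{X \rtimes_\xi B}, e \comp d)\); the three defining identities of a morphism of SSESs then reduce to the relations \(d \comp k = 0\) and \(d \comp e = 1_B\), both of which hold by construction of \eqref{diagr:SES xi}.

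I expect no genuine obstacle beyond bookkeeping. The one step that needs care is precisely the identification that makes the whole argument collapse: recognising that the cospan \((k,e)\) appearing in the statement is exactly the cospan defining \(\xi\)-commutativity, so that \(k\) and \(e\) form a cooperating pair for it by fiat. Once this is seen, the conceptual content is wholly delegated to \cref{lemma:equivariance => commutativity}, and the present result is merely its specialisation to the tautological cooperator \(1_{X \rtimes_\xi B}\); matching the notation \(d\), \(e\) used here with \(p\), \(s\) in \eqref{diagr:SES xi p} is the only remaining chore.
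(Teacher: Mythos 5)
Your proof is correct, but it follows a genuinely different route from the paper's. The paper argues by direct computation at the level of \(\flat\): using the explicit formula \(\conjact{X \rtimes_\xi B}{X \rtimes_\xi B} = \CoindArr{1_{X \rtimes_\xi B},1_{X \rtimes_\xi B}} \comp \kerb{X \rtimes_\xi B}{X \rtimes_\xi B}\) from \cref{ex:conj action}, the naturality identity \(\kerb{X \rtimes_\xi B}{X \rtimes_\xi B} \comp (e \flat k) = (e+k) \comp \kerb{B}{X}\) built into the construction of the bifunctor \(\flat\), and the equality \(\CoindArr{e,k} \comp \kerb{B}{X} = k \comp \xi\), which holds because \(\CoindArr{e,k}\) is the coequaliser \(q\) of \(\kerb{B}{X}\) and \(\iota_2 \comp \xi\) in \cref{diagr:induced SSES}, it obtains \(\conjact{X \rtimes_\xi B}{X \rtimes_\xi B} \comp (e \flat k) = k \comp \xi\) in three lines. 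You instead specialise \cref{lemma:equivariance => commutativity} to \(Y = X \rtimes_\xi B\), \(f = k\), \(g = e\), and observe that the required \(\xi\)-commutativity holds tautologically with cooperator \(1_{X \rtimes_\xi B}\); your \(\psi = (1_{X \rtimes_\xi B}, e \comp d)\) is exactly the \(\psi = (\coop{f}{g}, g \comp p)\) constructed in that lemma's proof, and there is no circularity, since \cref{lemma:equivariance => commutativity} precedes the present lemma and its proof nowhere invokes it. What each approach buys: yours is shorter and conceptually transparent---the lemma is exposed as the tautological instance of the equivariance-versus-commutativity correspondence---while the paper's computation is self-contained and delivers the commutativity of the \(\flat\)-square directly. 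One mild caveat for your route: it reaches the square formulation only through the equivalence between the square form and the SSES-morphism form of equivariance, which the statement of \cref{lemma:equivariance => commutativity} asserts via the actions--SSESs equivalence but whose written proof is carried out only ``from the point of view of SSESs''; so a reader wanting the identity \(\conjact{X \rtimes_\xi B}{X \rtimes_\xi B} \comp (e \flat k) = k \comp \xi\) concretely gets it from the paper's calculation, whereas your argument delegates it. Both are valid within the paper's logical structure.
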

\begin{proof}
    Using the formula of \cref{ex:conj action}, we compute
    \[
        \begin{split}
            \conjact{X \rtimes_\xi B}{X \rtimes_\xi B} \comp (e \flat k) &= (\CoindArr{1_{X \rtimes_\xi B},1_{X \rtimes_\xi B}} \comp \kerb{X \rtimes_\xi B}{X \rtimes_\xi B}) \comp (e \flat k) \\
            &= \CoindArr{1_{X \rtimes_\xi B},1_{X \rtimes_\xi B}} \comp \bigl((e+k) \comp \kerb{B}{X}\bigr) \\
            &= \CoindArr{e,k} \comp \kerb{B}{X} = k \comp \xi
        \end{split}
    \]
    as wanted, where the last equality follows from the construction of the induced SSES\@: see \cref{diagr:induced SSES}. (And the second equality follows from the construction of the bifunctor \(\flat\).)
\end{proof}

\begin{corollary}
    \label{coroll:commutativity => equivariance}
    Let \(\xi \from B \flat X \to X\), \(\xi' \from B' \flat X' \to X'\) be two actions and
    \[
        \begin{tikzcd}
            0 \arrow[r] & X \arrow[r, "k"] & X \rtimes_\xi B \arrow[r, "p", shift left] & B \arrow[l, "s", shift left] \arrow[r] & 0\ \text{,} \\
            0 \arrow[r] & X' \arrow[r, "k'"] & X' \rtimes_{\xi'} B' \arrow[r, "p'", shift left] & B' \arrow[l, "s'", shift left] \arrow[r] & 0
        \end{tikzcd}
    \]
    their induced SSESs. Two arrows \(f \from X \to X'\) and \(g \from B \to B'\) are equivariant with respect to \(\xi\) and \(\xi'\) if and only if \(k' \comp f \from X \to X' \rtimes_{\xi'} B'\) and \(s' \comp g \from B \to X' \rtimes_{\xi'} B'\) commute relatively to \(\xi\).
\end{corollary}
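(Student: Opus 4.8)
The plan is to chain the two lemmas just established by routing through the conjugation action on the codomain $Y \Def X' \rtimes_{\xi'} B'$. First I would apply \cref{lemma:equivariance => commutativity} to the pair of arrows $k' \comp f \from X \to Y$ and $s' \comp g \from B \to Y$: it tells us that $k' \comp f$ and $s' \comp g$ commute relatively to $\xi$ if and only if they are equivariant with respect to $\xi$ and the conjugation action $\conjact{Y}{Y}$ of $Y$ on itself, that is, if and only if
\[
    k' \comp f \comp \xi = \conjact{Y}{Y} \comp \bigl((s' \comp g) \flat (k' \comp f)\bigr)\text{.}
\]

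Next I would simplify the right-hand side. Using functoriality of the bifunctor $\flat$ together with the factorisations $s' \comp g \from B \xrightarrow{g} B' \xrightarrow{s'} Y$ and $k' \comp f \from X \xrightarrow{f} X' \xrightarrow{k'} Y$, the interchange law gives $(s' \comp g) \flat (k' \comp f) = (s' \flat k') \comp (g \flat f)$. Then \cref{lemma:conj semi-dir prod}, applied to the action $\xi'$ and its induced SSES, yields $\conjact{Y}{Y} \comp (s' \flat k') = k' \comp \xi'$. Substituting both identities turns the equivariance condition into
\[
    k' \comp f \comp \xi = k' \comp \xi' \comp (g \flat f)\text{.}
\]

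Finally, since $k'$ is a kernel and hence a monomorphism, this last equality is equivalent to $f \comp \xi = \xi' \comp (g \flat f)$, which is precisely the equivariance of the pair $(f,g)$ with respect to $\xi$ and $\xi'$; tracing the chain of equivalences then establishes the corollary in both directions. I do not anticipate a genuine obstacle here: the only delicate points are keeping the two slots of $\flat$ straight---acting object first, acted-upon object second---so that the interchange law is applied with the correct types, and invoking monicity of $k'$ to cancel it on the left.
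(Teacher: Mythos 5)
Your proposal is correct and follows essentially the same route as the paper: both arguments combine \cref{lemma:equivariance => commutativity} applied to the pair \((k' \comp f, s' \comp g)\) with \cref{lemma:conj semi-dir prod} applied to \(\xi'\), use bifunctoriality of \(\flat\) to factor \((s' \comp g) \flat (k' \comp f) = (s' \flat k') \comp (g \flat f)\), and cancel the (split) monomorphism \(k'\) to reduce to the equivariance of \((f,g)\). The only difference is presentational---you argue equationally where the paper pastes two commutative squares into a rectangle---so nothing further is needed.
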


\begin{proof}
    By \cref{lemma:equivariance => commutativity}, the commutativity of \(k' \comp f\) and \(s' \comp g\) relative to \(\xi\) is equivalent to the commutativity of the square below.
    \[
        \begin{tikzcd}[sep=huge]
            B \flat X \arrow[r, "\xi"] \arrow[d, "(s' \comp g) \flat (k' \comp f)"'] & X \arrow[d, "k' \comp f"] \\
            (X' \rtimes_{\xi'} B') \flat (X' \rtimes_{\xi'} B') \arrow[r, "\conjact{X' \rtimes_{\xi'} B'}{X' \rtimes_{\xi'} B'}"'] & X' \rtimes_{\xi'} B'
        \end{tikzcd}
    \]
    Now, consider the following diagram.
    \[
        \begin{tikzcd}[sep=huge]
            B \flat X \arrow[r, "\xi"] \arrow[d, "g \flat f"'] & X \arrow[d, "f"] \\
            B' \flat X' \arrow[r, "\xi'"] \arrow[d, "s' \flat k'"'] & X' \arrow[d, "k'"] \\
            (X' \rtimes_{\xi'} B') \flat (X' \rtimes_{\xi'} B') \arrow[r, "\conjact{X' \rtimes_{\xi'} B'}{X' \rtimes_{\xi'} B'}"'] & X' \rtimes_{\xi'} B'
        \end{tikzcd}
    \]
    Since \(k'\) is (split) monic and the bottom square always commutes by \cref{lemma:conj semi-dir prod}, the commutativity of the outer rectangle (which is the square above by functoriality of \(\flat\)) is equivalent to the commutativity of the top square, which correspond to the equivariance of \(f\) and \(g\), finishing the proof.
\end{proof}

\begin{remark}
    This corollary implies that, whenever we consider cospans induced by an action in \cref{prop:precomp commutativity}, we can reformulate the \((k',s')\)-commutativity condition of \(k \comp x\) and \(s \comp y\) as the equivariance of \(x\) and \(y\) with respect to the two actions involved.
\end{remark}

\section{Internal Crossed Modules}
\label{sec:XMod}
The concept of an \emph{internal crossed module} extends the idea behind a crossed module from the category of groups \(\Grp\) to arbitrary semi-abelian categories. This hinges upon the classical equivalence---see for instance~\cite{BS76,MLan98}---between the category of crossed modules (of groups) and the category of internal categories in~\(\Grp\). In~\cite{Jan03}, \emph{internal crossed} and \emph{precrossed modules} were defined as couples \((\del,\xi)\) where \(\del \from X \to B\) is an arrow and \(\xi \from B \flat X \to X\) is an action of \(B\) on~\(X\), satisfying certain compatibility conditions ensuring that the category of internal crossed modules (resp.\ precrossed modules) would be equivalent to the category of internal groupoids (resp.\ reflexive graphs). The aim of this section is to express those conditions in terms of twisted commutators.

\subsection{Definitions}
We make the definitions explicit (rephrasing \textnormal{(2.2)} and \textnormal{(3.15)} in~\cite{Jan03}) by means of the equivalence mentioned in \cref{sec:twisted commut}. Let \(\del \from X \to B\) be an arrow and \(\xi \from B \flat X \to X\) an action with induced SSES
\[
    \tag{\ref*{diagr:SES xi}}
    \begin{tikzcd}
        0 \arrow[r] & X \arrow[r, "k"] & X \rtimes_\xi B \arrow[r, "d", shift left] & B \arrow[l, "e", shift left] \arrow[r] & 0
    \end{tikzcd}\text{.}
\]

\begin{definition}
    \label{def:internal precrossed module}
    A couple \((\del,\xi)\) as above is an \emph{internal precrossed module} whenever there exists an arrow \(c \from X \rtimes_\xi B \to B\) such that \(c \comp e = 1_B\) and \(c \comp k = \del\).
    \[
        \begin{tikzcd}
            0 \arrow[r] & X \arrow[r, "k"] & X \rtimes_\xi B \arrow[r, "d", shift left = 2] \arrow[r, "c"', shift right = 2] & B \arrow[l, "e" description] \arrow[r] & 0
        \end{tikzcd}
    \]
\end{definition}

\begin{remark}
    \label{rem:PXM as RG}
    Since the arrow \(k\) can be recovered as a kernel of \(d\), an internal precrossed module is essentially just a \emph{reflexive graph} \((X \rtimes_\xi B, B, d, c, e)\): a diagram of the form \begin{tikzcd}[cramped]
        C_1 \arrow[r, "d", shift left = 2] \arrow[r, "c"', shift right = 2] & C_0 \arrow[l, "e" description]
    \end{tikzcd} where \(d \comp e = 1_{C_0}\) and \(c \comp e = 1_{C_0}\).
\end{remark}

\begin{definition}
    An internal precrossed module is an \emph{internal crossed module} if, when we see it as a reflexive graph (see \cref{rem:PXM as RG} above), it admits an internal groupoid structure.
\end{definition}

Before proceeding with the characterisations of precrossed and crossed modules in terms of twisted commutators, in the next subsection we first present some definitions and results about reflexive graphs.

\subsection{Reflexive Graphs}
In what follows, unless otherwise stated, we will work in a pointed finitely complete category \(\C\). We recall the definition of the \emph{normalisation} of a reflexive graph as well as some relations between a given reflexive graph and its normalisation.

\begin{definition}
    The \emph{normalisation} of a reflexive graph \begin{tikzcd}[cramped]
        C_1 \arrow[r, "d", shift left = 2] \arrow[r, "c"', shift right = 2] & C_0 \arrow[l, "e" description]
    \end{tikzcd} is the arrow \(c \comp k\) where \(k\) is a kernel of \(d\).
\end{definition}

Originally, we considered the normalisation of effective equivalence relations and, in this case, this normalisation is a normal monomorphism, whence the terminology. It is well known that, in a semi-abelian category, a reflexive graph is an equivalence relation if and only if its normalisation is a (normal) monomorphism. Let us recall the proof:

\begin{lemma}
    \label{lemma:refl rel iff norm monic}
    If a reflexive graph \begin{tikzcd}[cramped]
        C_1 \arrow[r, "d", shift left = 2] \arrow[r, "c"', shift right = 2] & C_0 \arrow[l, "e" description]
    \end{tikzcd} is a \emph{reflexive relation}, i.e.\ if \(d\) and \(c\) are jointly monic, then its normalisation \(c \comp k\), where \(k \from X \to C_1\) is a kernel of \(d\), is monic. Moreover, if we are in a pointed protomodular context, then the converse is also true.
\end{lemma}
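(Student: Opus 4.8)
The plan is to treat the two implications separately, noting that the forward direction holds in any pointed category while the converse is exactly where protomodularity is used.

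For the forward implication, I assume that \(d\) and \(c\) are jointly monic, i.e.\ that the induced map \((d,c) \from C_1 \to C_0 \times C_0\) is a monomorphism, and check directly that \(c \comp k\) is monic. Given \(x, y \from T \to X\) with \((c \comp k) \comp x = (c \comp k) \comp y\), I also have \((d \comp k) \comp x = 0 = (d \comp k) \comp y\) since \(k\) is a kernel of \(d\); hence \((d,c) \comp k \comp x = (d,c) \comp k \comp y\), so \(k \comp x = k \comp y\) by joint monicity, and finally \(x = y\) because \(k\), being a kernel, is monic. This requires nothing beyond pointedness.

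For the converse I now assume \(\C\) pointed protomodular and \(c \comp k\) monic, and aim to show that \((d,c)\) is a monomorphism by first computing its kernel. Let \(\mu \from M \to C_1\) be the kernel of \((d,c)\), so that \(d \comp \mu = 0\) and \(c \comp \mu = 0\). From \(d \comp \mu = 0\) and the universal property of the kernel \(k\) of \(d\), the map \(\mu\) factors uniquely as \(\mu = k \comp \nu\); then \(0 = c \comp \mu = (c \comp k) \comp \nu\) forces \(\nu = 0\) because \(c \comp k\) is monic, whence \(\mu = k \comp \nu = 0\). As \(\mu\) is simultaneously a monomorphism and the zero map, its domain \(M\) is a zero object, so the kernel of \((d,c)\) is trivial.

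The remaining step---passing from \enquote{trivial kernel} to \enquote{monomorphism}---is the only place the hypothesis is genuinely needed, and is the main (if standard) obstacle: in a general pointed category a map with trivial kernel need not be monic. In a pointed protomodular category, however, this implication does hold, and I would either cite it directly (see~\cite{BB04}) or reproduce the short argument: form the kernel pair \(p_1, p_2 \from R[(d,c)] \to C_1\) with common section the diagonal \(\diag\); the kernel of \(p_1\) is isomorphic to the (trivial) kernel of \((d,c)\), so the split epimorphism \(p_1\) has trivial kernel. By protomodularity its kernel and section \(\diag\) are jointly strongly epic, so \(\diag\) alone is strongly epic; being also split monic, \(\diag\) is then an isomorphism, forcing \(p_1 = p_2\) and hence \((d,c)\) monic. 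This shows that \(d\) and \(c\) are jointly monic, i.e.\ that the reflexive graph is a reflexive relation, completing the proof.
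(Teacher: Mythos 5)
Your proof is correct and follows essentially the same route as the paper's: the forward direction is the identical joint-monicity argument, and for the converse both proofs reduce to showing that the kernel of \((d,c)\) is trivial (the paper identifies it as \(X \cap \Ker(c) = \Ker(c \comp k)\) via a pasted pullback diagram, you chase the universal property directly) and then invoke the pointed-protomodular fact that trivial kernel implies monomorphism, which the paper simply cites as \cite[Proposition~3.1.21]{BB04} while you also reproduce its standard proof.
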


\begin{proof}
    Let us first assume that we have a reflexive relation. Then \(d \comp k\) and \(c \comp k\) are jointly monic since \(k\) is a monomorphism. Moreover, \(d \comp k\) is the trivial arrow because \(k\) is a kernel of \(d\) so the previous sentence implies that \(c \comp k\) is monic.

    Now, let us assume that our category \(\C\) is protomodular and that \(c \comp k\) is monic. Consider the following diagram where all the squares except the bottom right one are pullbacks.
    \[
        \begin{tikzcd}[row sep=large]
            X \cap \Ker(c) \pb{rd} \arrow[r] \arrow[d] & \Ker(c) \pb{rd} \arrow[r] \arrow[d, "\ker(c)"'] & 0 \arrow[d] \\
            X \pb{rd} \arrow[r, "k"] \arrow[d] & C_1 \arrow[r, "c"] \arrow[d, "d"'] \arrow[rd, "{(d,c)}" description] & C_0 \\
            0 \arrow[r] & C_0 & C_0 \times C_0 \arrow[u, "\pi_2"'] \arrow[l, "\pi_1"]
        \end{tikzcd}
    \]
    We check that \(X \cap \Ker(c)\) is a kernel of \((d,c)\) (using the universal properties of the two kernels and the pullback involved) and we will show that this object is trivial, which implies that \((d,c)\) is monic (meaning that \(d\) and \(c\) are jointly monic) as needed for \cite[Proposition~3.1.21]{BB04}. Since the two top squares are pullbacks, so is the top rectangle. Therefore, \(X \cap \Ker(c)\) is a kernel of \(c \comp k\). So it is trivial since \(c \comp k\) is monic, which finishes the proof.
\end{proof}

Our second lemma is a characterisation of connected graphs, which is an instance of \cite[Proposition~17]{Bou01}; see also~\cite{RVdL10}.

\begin{lemma}
    \label{lemma:conn refl graph iff norm reg epi}
    In a homological category \(\C\), a reflexive graph \begin{tikzcd}[cramped]
        C_1 \arrow[r, "d", shift left = 2] \arrow[r, "c"', shift right = 2] & C_0 \arrow[l, "e" description]
    \end{tikzcd} is \emph{connected}, i.e.\ the arrow \((d,c) \from C_1 \to C_0 \times C_0\) is a regular epimorphism, if and only if its normalisation \(c \comp k\), where \(k \from X \to C_1\) is a kernel of \(d\), is a regular epimorphism.
\end{lemma}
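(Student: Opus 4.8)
The plan is to recognise \((d,c) \from C_1 \to C_0 \times C_0\) as a morphism of (split) short exact sequences and to read off both implications from the behaviour of regular epimorphisms under this comparison. First I would record the two evident identities: since \(d \comp k = 0\) we have \((d,c) \comp k = (0, c \comp k) = (0,1_{C_0}) \comp (c \comp k)\), and since \(d \comp e = 1_{C_0} = c \comp e\) we have \((d,c) \comp e = \diag_{C_0}\). Writing \(\iota = (0,1_{C_0}) \from C_0 \to C_0 \times C_0\) for the (split monic) kernel of the first projection \(\pi_1\), this says exactly that \((d,c)\) is a morphism from the split short exact sequence \(0 \to X \to C_1 \to C_0 \to 0\) (kernel \(k\), cokernel \(d\), splitting \(e\)) to the split short exact sequence \(0 \to C_0 \xrightarrow{\iota} C_0 \times C_0 \xrightarrow{\pi_1} C_0 \to 0\) (splitting \(\diag_{C_0}\)), inducing \(c \comp k\) on kernels and \(1_{C_0}\) on cokernels.

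For the forward implication I would observe that \(c \comp k\) is literally a pullback of \((d,c)\). Indeed, pulling back \((d,c)\) along \(\iota\) and using that \(\iota = \ker(\pi_1)\) while \(\pi_1 \comp (d,c) = d\), the pullback projection onto \(C_1\) is the kernel of \(d\), i.e.\ \(k\); the identity \((d,c) \comp k = \iota \comp (c \comp k)\) together with \(\iota\) being monic then identifies the other projection with \(c \comp k\). Hence, if \((d,c)\) is a regular epimorphism, so is \(c \comp k\), since regular epimorphisms are stable under pullback in the regular category \(\C\)~\cite{BB04}.

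The converse is the main obstacle. Assume \(c \comp k\) is a regular epimorphism and take the (regular epi, mono)-factorisation \((d,c) = m \comp p\) with \(m \from I \to C_0 \times C_0\) monic; I must show \(m\) is an isomorphism. The two identities above show that both \(\diag_{C_0} = (d,c) \comp e\) and \((0,1_{C_0}) \comp (c \comp k) = (d,c) \comp k\) factor through \(m\); as \(c \comp k\) is a regular epimorphism, the image of the latter is the whole second factor \(\iota\). Thus the subobject \(m\) contains both the diagonal \(\diag_{C_0}\) and the second inclusion \(\iota\). Restricting \(\pi_1\) to \(I\) then yields a split epimorphism (split by the corestriction of \(\diag_{C_0}\)) whose kernel is exactly \(\iota\), since \(\iota = \ker(\pi_1)\) already factors through \(m\); so \(m\) becomes a morphism of split short exact sequences inducing identities on both kernel and cokernel. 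By the short five lemma in the protomodular category \(\C\)~\cite{BB04}, \(m\) is an isomorphism, whence \((d,c) = m \comp p\) is a regular epimorphism. The crux throughout is this last step: translating ``the image of \((d,c)\) contains both the diagonal and the second factor'' into surjectivity of \((d,c)\), which is precisely what the short five lemma delivers.
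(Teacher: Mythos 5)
Your proof is correct, and its skeleton matches the paper's: both arguments hinge on viewing \((d,c)\) as a morphism of split short exact sequences, from \(0 \to X \to C_1 \to C_0 \to 0\) (kernel \(k\), splitting \(e\)) to \(0 \to C_0 \to C_0 \times C_0 \to C_0 \to 0\) (kernel \((0,1_{C_0})\), splitting \(\diag_{C_0}\)), inducing \(c \comp k\) on kernels and \(1_{C_0}\) on cokernels. Your forward implication is identical to the paper's: the square with sides \(k\), \(c \comp k\), \((d,c)\), \((0,1_{C_0})\) is a pullback, and regular epimorphisms are pullback-stable in a regular category. Where you genuinely diverge is the converse: the paper concludes in one stroke by citing \cite[Lemma~4.2.5]{BB04}, a general result for morphisms of exact rows in a homological category with a regular epimorphism on kernels and an isomorphism on cokernels, whereas you unfold that citation into a direct argument---take the (regular epi, mono)-factorisation \((d,c) = m \comp p\), show that both \(\diag_{C_0}\) and \((0,1_{C_0})\) factor through \(m\) (the latter by comparing image factorisations of \((0,1_{C_0}) \comp (c \comp k)\), using that \(c \comp k\) is a regular epimorphism), verify that \(m\) is itself a morphism of split short exact sequences over identities, and invoke the split short five lemma \cite[Theorem~4.1.10]{BB04}. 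Your intermediate checks are sound, in particular that the factorisation \(j\) of \((0,1_{C_0})\) through the monomorphism \(m\) is a kernel of \(\pi_1 \comp m\), which is what makes the top row of your comparison a genuine split short exact sequence. In effect you have reproved the needed instance of \cite[Lemma~4.2.5]{BB04}: the paper's route is shorter, while yours is self-contained and isolates exactly which axioms do the work (regularity for the image factorisation, protomodularity for the split short five lemma).
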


\begin{proof}
    Let \((d,c)\) be a regular epimorphism. We check that the following square is a pullback since \(k\) is a kernel of \(d\).
    \[
        \begin{tikzcd}[squared=5.5]
            X \pb[.15]{rd} \arrow[r, "k"] \arrow[d, "c \comp k"'] & C_1 \arrow[d, "{(d,c)}"] \\
            C_0 \arrow[r, "{(0,1_{C_0})}"'] & C_0 \times C_0
        \end{tikzcd}
    \]
    Therefore, since \((d,c)\) is a regular epimorphism and, in a regular category, regular epimorphisms are pullback-stable, this diagram tells us that \(c \comp k\) is also a regular epimorphism.

    For the other direction, consider the following diagram where \(k\) and \((0,1_{C_0})\) are kernels of \(d\) and \(\pi_1\), respectively. Here \(d\) is a cokernel of its kernel \(k\), since it is split by \(e\).
    \[
        \begin{tikzcd}
            0 \arrow[r] & X \arrow[r, "k"] \arrow[d, "c \comp k"] & C_1 \arrow[r, "d"] \arrow[d, "{(d,c)}"] & C_0 \arrow[r] \arrow[d, equal] & 0 \\
            0 \arrow[r] & C_0 \arrow[r, "{(0,1_{C_0})}"'] & C_0 \times C_0 \arrow[r, "\pi_1"'] & C_0
        \end{tikzcd}
    \]
    We may check that this diagram commutes. Then, by \cite[Lemma~4.2.5]{BB04}, the morphism \((d,c)\) is a regular epimorphism, since we have a regular epimorphism on the left and an isomorphism on the right.
\end{proof}

Before going to the last definitions of this section, here is an alternative characterisation for connected graphs in an exact Mal'tsev category.

\begin{lemma}
    In an exact Mal'tsev category \(\C\), a reflexive graph \begin{tikzcd}[cramped]
        C_1 \arrow[r, "d", shift left = 2] \arrow[r, "c"', shift right = 2] & C_0 \arrow[l, "e" description]
    \end{tikzcd} is such that the arrow \((d,c) \from C_1 \to C_0 \times C_0\) is an epimorphism if and only if the pushout \(\pi_0 (C)\) of \(d\) and \(c\) is trivial. Moreover, when this is the case, the epimorphism \((d,c)\) is regular so this reflexive graph is connected in the sense of the previous lemma.
\end{lemma}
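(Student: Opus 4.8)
The plan is to prove the two implications separately, after using the common section $e$ to identify the pushout $\pi_0(C)$ with the coequaliser of $d$ and $c$. Since $d \comp e = c \comp e = 1_{C_0}$, the two coprojections $C_0 \to \pi_0(C)$ of the pushout of $d$ and $c$ must coincide (precompose the square $q_1 \comp d = q_2 \comp c$ with $e$), so the pushout reduces to a single arrow $q \from C_0 \to \pi_0(C)$ satisfying $q \comp d = q \comp c$ and enjoying the universal property of the coequaliser of $d$ and $c$. I would make this identification first, so that both directions can be phrased in terms of this $q$.

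For the implication ``$(d,c)$ epic $\Rightarrow \pi_0(C)$ trivial'', I would argue purely formally in the pointed context. From $q \comp d = q \comp c$ we get $(q \comp \pi_1) \comp (d,c) = (q \comp \pi_2) \comp (d,c)$, so cancelling the epimorphism $(d,c)$ yields $q \comp \pi_1 = q \comp \pi_2 \from C_0 \times C_0 \to \pi_0(C)$. Precomposing with $(1_{C_0},0)$, which kills the second projection, gives $q = 0$; since $q$ is at the same time a (regular) epimorphism, its codomain $\pi_0(C)$ must be a zero object. This half uses nothing beyond pointedness.

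For the converse — which I would prove in the sharper form ``$\pi_0(C)$ trivial $\Rightarrow (d,c)$ is a regular epimorphism'', thereby also settling the ``moreover'' — I would pass to the regular image. Factor $(d,c) = r \comp \sigma$ with $\sigma$ a regular epimorphism and $r = (r_1, r_2)$ a monomorphism, so that $r$ exhibits the image $R$ as a relation on $C_0$. This relation is reflexive, since $(d,c) \comp e = \diag_{C_0}$ shows the diagonal factors through it; hence the Mal'tsev hypothesis upgrades $R$ to an equivalence relation, and exactness makes it effective, i.e.\ the kernel pair of its coequaliser $q' \from C_0 \to Q$. Because $\sigma$ is epic, an arrow coequalises $d$ and $c$ exactly when it coequalises $r_1$ and $r_2$, so $Q \cong \pi_0(C)$. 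If $\pi_0(C) = 0$, then $q'$ is the zero arrow $C_0 \to 0$, whose kernel pair is the full relation $C_0 \times C_0$; by effectiveness $R = C_0 \times C_0$, forcing $r$ to be an isomorphism and $(d,c) \cong \sigma$ to be a regular epimorphism. The graph is then connected in the sense of the previous lemma.

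The main obstacle — indeed the only nonformal point — is the middle step of the converse: recognising that the image $R$ of $(d,c)$ is an effective equivalence relation whose quotient is exactly $\pi_0(C)$. This is where both standing hypotheses are genuinely needed (the Mal'tsev property to turn the reflexive relation $R$ into an equivalence relation, and exactness to make it effective), and where one must check carefully that the coequaliser is unchanged upon passing through the regular epimorphism $\sigma$. Everything else is diagram-chasing valid in any pointed regular category.
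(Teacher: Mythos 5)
Your proposal is correct, but its second half follows a genuinely different route from the paper's. For the forward implication the two arguments are the same kind of formal pointed computation: you identify the two pushout coprojections via the common section \(e\) and cancel the epimorphism \((d,c)\) against \(q \comp d = q \comp c\), whereas the paper keeps both coprojections \(i_1, i_2\) and observes that they factor through the (trivial) pushout of the product projections; these differ only cosmetically. The real divergence is in the converse. The paper invokes \cite[Theorem~5.7]{CKP93} twice: once for the existence of the pushout of the split epimorphisms \(d\) and \(c\), and once for the fact that, in an exact Mal'tsev category, the comparison map from the pushout square to the pullback of its cospan---here \(C_0 \times C_0\), since \(\pi_0(C) = 0\)---is a regular epimorphism, that comparison map being exactly \((d,c)\). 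You instead re-prove the relevant special case from first principles: the image \(R\) of \((d,c)\) is a reflexive relation (as \((d,c) \comp e = \diag_{C_0}\)), hence an equivalence relation by the Mal'tsev property, hence effective by exactness; its quotient agrees with \(\pi_0(C)\) because the regular epimorphism \(\sigma\) in the factorisation \((d,c) = r \comp \sigma\) does not change the coequaliser; and when \(\pi_0(C) = 0\) the kernel pair of \(C_0 \to 0\) is all of \(C_0 \times C_0\), forcing the monomorphism \(r\) to be an isomorphism and \((d,c)\) to be regular epic. What each approach buys: the paper's is shorter given the citation, while yours is self-contained, isolates precisely where the Mal'tsev and exactness hypotheses enter, and incidentally yields the existence of the coequaliser---hence, by your identification through \(e\), of the pushout---without appealing to \cite{CKP93} at all.
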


\begin{proof}
    First, note that the pushout of \(d\) and \(c\) exists by \cite[Theorem~5.7]{CKP93} since \(d\) and \(c\) are split epimorphisms.

    Now, assume that \((d,c)\) is an epimorphism and consider the pushout
    \[
        \begin{tikzcd}[squared]
            C_1 \arrow[r, "c"] \arrow[d, "d"'] & C_0 \arrow[d, "i_2"] \\
            C_0 \arrow[r, "i_1"'] & \pi_0 (C) \po{lu}
        \end{tikzcd}
    \]
    of \(d\) and \(c\). Let \(\pi_1\), \(\pi_2\) be the two projections of the product \(C_0 \times C_0\). We compute
    \[
        (i_1 \comp \pi_1) \comp (d,c) = i_1 \comp d = i_2 \comp c = (i_2 \comp \pi_2) \comp (d,c)
    \]
    so \(i_1 \comp \pi_1\) and \(i_2 \comp \pi_2\) are equal since \((d,c)\) is epic.
    \[
        \begin{tikzcd}[squared=2.25]
            C_0 \times C_0 \arrow[rr, "\pi_2"] \arrow[dd, "\pi_1"'] & & C_0 \arrow[dd] \arrow[rddd, "i_2", bend left] \\
            \\
            C_0 \arrow[rr] \arrow[rrrd, "i_1"', bend right] & & 0 \po{lluu} \arrow[rd, dashed] \\
            & & & \pi_0 (C)
        \end{tikzcd}
    \]
    Therefore, \(i_1\) and \(i_2\) factor through the pushout of \(\pi_1\) and \(\pi_2\), which is trivial since it is the pushout of a product, so \(i_1\) and \(i_2\) are trivial arrows. Finally, since \(i_1\) and \(i_2\) are jointly epic as they form a pushout, this implies that \(\pi_0 (C)\) is trivial---a cospan of trivial arrows is jointly epic only if their codomain is trivial---which finishes the proof of this implication.

    For the other direction, since \(\pi_0 (C)\) is trivial, \(C_0 \times C_0\) is a pullback of \(i_1\) and \(i_2\) so \((d,c)\) is the comparison map between the pushout and this pullback in the diagram below.
    \[
        \begin{tikzcd}[squared=3.5]
            C_1 \arrow[rrrd, "c", bend left] \arrow[rd, "{(d,c)}" description] \arrow[rddd, "d"', bend right] \\
            & C_0 \times C_0 \pb[.15]{rrdd} \arrow[rr, "\pi_2"] \arrow[dd, "\pi_1"'] & & C_0 \arrow[dd, "i_2"] \\
            \\
            & C_0 \arrow[rr, "i_1"'] & & \pi_0 (C) = 0
        \end{tikzcd}
    \]
    Therefore, by \cite[Theorem~5.7]{CKP93} and the fact that \(d\) and \(c\) are regular epimorphisms as they are split, \((d,c)\) is a regular epimorphism as announced.
\end{proof}

Let us now introduce multiplicative and star-multiplicative graphs~\cite[Definitions~4.1 and 5.1]{MM10}, needed for \cref{thm:(SM) = (SH)} below. This theorem is relevant for our characterisation of crossed modules since, in a semi-abelian category, groupoids are exactly multiplicative graphs~\cites[Theorem~2.2]{CPP92}[Proposition~5.1.2]{BB04}.

\begin{definitions}
    \label{def:(star-)mult graph}
    Consider a reflexive graph \begin{tikzcd}[cramped]
        C_1 \arrow[r, "d", shift left = 2] \arrow[r, "c"', shift right = 2] & C_0 \arrow[l, "e" description]
    \end{tikzcd}.
    \begin{itemize}
        \item The reflexive graph \((C_1, C_0, d, c, e)\) is \emph{multiplicative} when there is an arrow \(m \from C_1 \times_{C_0} C_1 \to C_1\) such that \(m \comp (1_{C_1}, e \comp d) = 1_{C_1}\) and \(m \comp (e \comp c, 1_{C_1}) = 1_{C_1}\)
              \[
                  \begin{tikzcd}[sep=huge]
                      C_1 \arrow[r, "{(1_{C_1}, e \comp d)}"] \arrow[rd, equal] & C_1 \times_{C_0} C_1 \arrow[d, "m"] & C_1 \arrow[l, "{(e \comp c, 1_{C_1})}"'] \arrow[ld, equal] \\
                      & C_1
                  \end{tikzcd}
              \]
              where \(C_1 \times_{C_0} C_1\) is constructed as the pullback
              \[
                  \begin{tikzcd}[squared=5.5]
                      C_1 \times_{C_0} C_1 \pb{rd} \arrow[r, "p_1"] \arrow[d, "p_0"'] & C_1 \arrow[d, "c"] \\
                      C_1 \arrow[r, "d"'] & C_0
                  \end{tikzcd}
              \]
              and \((1_{C_1}, e \comp d)\), \((e \comp c, 1_{C_1})\) are the arrows induced by the universal property of this pullback. This arrow \(m\) is called the \emph{multiplication} of \((C_1, C_0, d, c, e)\).
        \item Let \(k \from X \to C_1\) be a kernel of \(d\). The reflexive graph \((C_1, C_0, d, c, e)\) is \emph{star-multiplicative} when there is an arrow \(\zeta \from C_1 \times_{C_0} X \to X\) such that \(\zeta \comp (k,0) = 1_X\) and \(\zeta \comp (e \comp c \comp k, 1_X) = 1_X\)
              \begin{eqdiagr}
                  \label{diagr:star-mult}
                  \begin{tikzcd}[sep=huge]
                      X \arrow[r, "{(k,0)}"] \arrow[rd, equal] & C_1 \times_{C_0} X \arrow[d, "\zeta"] & X \arrow[l, "{(e \comp c \comp k, 1_X)}"'] \arrow[ld, equal] \\
                      & X
                  \end{tikzcd}
              \end{eqdiagr}
              where \(C_1 \times_{C_0} X\) is constructed as the pullback
              \[
                  \begin{tikzcd}[squared=5.5]
                      C_1 \times_{C_0} X \pb{rd} \arrow[r, "\pi_1"] \arrow[d, "\pi_0"'] & X \arrow[d, "c \comp k"] \\
                      C_1 \arrow[r, "d"'] & C_0
                  \end{tikzcd}
              \]
              and \((k,0)\), \((e \comp c \comp k, 1_X)\) are the arrows induced by the universal property of this pullback. This arrow \(\zeta\) is called the \emph{star-multiplication} of \((C_1, C_0, d, c, e)\).
    \end{itemize}
\end{definitions}

\begin{remark}
    \label{rem:mult implies star-mult}
    Every multiplicative graph is star-multiplicative; the idea is that the star-multiplication is a kind of restriction of the multiplication: see \cite{MVdL12} for further details. Consider the arrow \(m \comp \mu\) where \(\mu \from C_1 \times_{C_0} X \to C_1 \times_{C_0} C_1\) is induced in the diagram below by the universal property of the pullback.
    \[
        \begin{tikzcd}[squared=5.5]
            C_1 \times_{C_0} X \arrow[r, "\pi_1"] \arrow[rd, "\mu", dashed] \arrow[d, "\pi_0"'] & X \arrow[rd, "k", bend left] \\
            C_1 \arrow[rd, equal, bend right] & C_1 \times_{C_0} C_1 \pb{rd} \arrow[r, "p_1"] \arrow[d, "p_0"'] & C_1 \arrow[d, "c"] \\
            & C_1 \arrow[r, "d"'] & C_0
        \end{tikzcd}
    \]
    We will show that \(m \comp \mu\) factors through \(X\) and that this factorisation is the wanted star-multiplication. Since \(X\) is a kernel of \(d\), we must show that \(d \comp (m \comp \mu) = 0\). In order to do so, we will precompose this arrow with \((k,0)\) and \((e \comp c \comp k, 1_X)\), which will let us conclude since these two arrows are jointly epic (as \((k,0)\) is a kernel of the splitting \(\pi_1\) of \((e \comp c \comp k, 1_X)\)). By uniqueness, we have \(\mu = (1_{C_1}, e \comp d) \comp \pi_0 = (e \comp c, 1_{C_1}) \comp k \comp \pi_1\) so that
    \[
        d \comp (m \comp \mu) \comp (k,0) = d \comp \pi_0 \comp (k,0) = d \comp k = 0
    \]
    and
    \[
        d \comp (m \comp \mu) \comp (e \comp c \comp k, 1_X) = d \comp k \comp \pi_1 \comp (e \comp c \comp k, 1_X) = d \comp k = 0
    \]
    as expected. Therefore, the universal property of \(k\) gives us an arrow from \(C_1 \times_{C_0} X\) to \(X\) that factorises \(m \comp \mu\) through \(k\) and this is this arrow that we choose to be the star-multiplication \(\zeta\). To conclude, we must show that this \(\zeta\) makes
    \[
        \begin{tikzcd}[sep=huge]
            X \arrow[r, "{(k,0)}"] \arrow[rd, equal] & C_1 \times_{C_0} X \arrow[d, "\zeta"] & X \arrow[l, "{(e \comp c \comp k, 1_X)}"'] \arrow[ld, equal] \\
            & X
        \end{tikzcd}
    \]
    commute and, since \(k\) is monic, we can reduce these two equations by postcomposing them by \(k\). But these two reduced equations correspond exactly to the computations we made just above, the composition with \(d\) intervening only on the last step in both cases, which finishes the proof.
\end{remark}

\begin{remark}
    \label{rem:star-mult commut}
    Given a reflexive graph \((C_1, C_0, d, c, e)\), let us note that the arrow \(\pi_1 \from C_1 \times_{C_0} X \to X\) is at the same time a cokernel of \((k,0) \from X \to C_1 \times_{C_0} X\) and a splitting of \((e \comp c \comp k, 1_X) \from X \to C_1 \times_{C_0} X\). The fact that \(\pi_1 \comp (e \comp c \comp k, 1_X) = 1_X\) is immediate. To show that \(\pi_1\) is a cokernel of \((k,0)\), let us show that \((k,0)\) is kernel of it, the conclusion following since \(\pi_1\) is split. Let \(f \from A \to C_1 \times_{C_0} X\) such that \(\pi_1 \comp f = 0\). Then, by the definition of the pullback, \(f\) is of the form \(f = (g,0)\) with \(g \from A \to C_1\) such that \(d \comp g = (c \comp k) \comp 0 = 0\) and, since \(k\) is a kernel of \(d\), we also have a factorisation \(g = k \comp \varphi\). Finally, we remark that this factorisation \(\varphi \from A \to X\) is the unique arrow such that \((k,0) \comp \varphi = f\), showing that \((k,0)\) is a kernel of \(\pi_1\).
    \[
        \begin{tikzcd}
            A \arrow[rd, "f"] \arrow[d, "\exists! \varphi"', dashed] \\
            X \arrow[r, "{(k,0)}"'] & C_1 \times_{C_0} X \arrow[r, "\pi_1"'] & X
        \end{tikzcd}
    \]
    Therefore, \((k,0)\) and \((e \comp c \comp k, 1_X)\) is the cospan associated to an action \(\xi \from X \to X\) and we can view \cref{diagr:star-mult} as the commutativity of \(1_X\) with itself, twisted by this action \(\xi\). Finally, by \cref{prop:rel commutativity}, this means that the star-multiplicativity of a reflexive graph \((C_1, C_0, d, c, e)\) is characterised by the commutator condition \(\Commut{1_X}{1_X}_\xi = 0\) where \(\xi \from X \to X\) is the internal action associated to the SSES
    \[
        \begin{tikzcd}
            0 \arrow[r] & X \arrow[rr, "{(k,0)}"] && C_1 \times_{C_0} X \arrow[rr, "\pi_1", shift left] && X \arrow[ll, "{(e \comp c \comp k, 1_X)}", shift left] \arrow[r] & 0
        \end{tikzcd}\text{.}
    \]
    (Let us also note that the multiplicativity of a reflexive graph can also be characterised by a commutator condition, but this commutator has no reason to be twisted by an action.)
\end{remark}

\subsection{Characterisation in Terms of Twisted Commutators}
We can now state and prove our characterisations of precrossed and crossed modules in terms of twisted commutators. These are variations of the results of \cite{MM10} so the goal of this section is mainly to reformulate them in terms of twisted commutators via \cref{lemma:equivariance => commutativity} (and provide an alternative proof in the case of \cref{thm:charact cross mod}). In what follows, in a semi-abelian category, we consider an arrow \(\del \from X \to B\) and an action \(\xi \from B \flat X \to X\) with induced SSES~\eqref{diagr:SES xi}.

Let us begin with the characterisation of internal precrossed modules, which is a straightforward consequence of \cref{prop:rel commutativity}.

\begin{theorem}
    \label{thm:charact precross mod}
    In a semi-abelian category, let \(\del \from X \to B\) be an arrow and \(\xi\) an action of \(B\) on \(X\). The couple \((\del,\xi)\) is an internal precrossed module if and only if it satisfies the so-called \emph{Precrossed Module Condition \PCM{}}\@, which is the equality \(\Commut{\del}{1_B}_\xi = 0\).
\end{theorem}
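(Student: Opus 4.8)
The plan is to recognise that the defining condition of an internal precrossed module is, almost verbatim, a commutativity statement, and then to invoke \cref{prop:rel commutativity} to translate that commutativity into the vanishing of the associated commutator. Concretely, $\Commut{\del}{1_B}_\xi$ is the commutator of $\del \from X \to B$ and $1_B \from B \to B$ relative to the cospan of monomorphisms $k$ and $e$ appearing in the induced SSES~\eqref{diagr:SES xi}; since the ambient semi-abelian category is protomodular, this cospan is extremally epic (as already noted in \cref{sec:twisted commut}), so \cref{prop:rel commutativity} is applicable.

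First I would apply \cref{prop:rel commutativity} to conclude that $\Commut{\del}{1_B}_\xi = 0$ if and only if $\del$ and $1_B$ commute relatively to $\xi$. Unwinding \cref{def:rel commutativity} in this situation, the latter means precisely that there exists a $(k,e)$-cooperator $c \from X \rtimes_\xi B \to B$ of $\del$ and $1_B$, that is, an arrow satisfying $c \comp k = \del$ and $c \comp e = 1_B$.

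Finally, I would observe that this is exactly the condition defining an internal precrossed module in \cref{def:internal precrossed module}, the cooperator $c = \coop{\del}{1_B}$ playing the role of the structural arrow $c$ there. The two conditions being literally the same, the desired equivalence follows. I expect no genuine obstacle: all the substantive work has already been carried out in \cref{prop:rel commutativity}, and what remains is simply to identify the cooperator of $\del$ and $1_B$ with the comparison arrow $c$ of the precrossed module.
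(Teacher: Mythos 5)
Your proposal is correct and follows exactly the same route as the paper's own proof: both reduce the statement to \cref{prop:rel commutativity} (after noting the cospan \((k,e)\) is extremally epic by protomodularity) and then identify the resulting \((k,e)\)-cooperator of \(\del\) and \(1_B\) with the structural arrow \(c\) of \cref{def:internal precrossed module}. No gaps; nothing further is needed.
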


\begin{proof}
    By \cref{prop:rel commutativity}, the triviality of the commutator \(\Commut{\del}{1_B}_\xi\) is equivalent to the existence of an arrow \(\varphi \from X \rtimes_\xi B \to B\)
    \[
        \begin{tikzcd}[row sep = large]
            X \arrow[r, "k"] \arrow[rd, "\del"'] & X \rtimes_\xi B \arrow[d, "\varphi" description, dotted] & B \arrow[l, "e"'] \arrow[ld, "1_B"] \\
            & B
        \end{tikzcd}
    \]
    such that \(\varphi \comp k = \del\) and \(\varphi \comp e = 1_B\), which are exactly the conditions the arrow \(c\) in \cref{def:internal precrossed module} must satisfy.
\end{proof}

The characterisation of internal crossed modules is less simple. In fact, we will even need an additional assumption: we ask that our semi-abelian category \(\C\) satisfies the \emph{Smith-is-Huq Condition \SH{}}~\cite{BG02b}.

\begin{definition}[{\cite[Section~1]{MVdL12}}]
    A semi-abelian category \(\C\) satisfies the \emph{Smith-is-Huq Condition \SH{}} whenever two effective equivalence relations in \(\C\) commute if and only if their normalisations commute.
\end{definition}

Examples include any Moore category in the sense of~\cite{Rod04} and, more generally, any \emph{algebraically coherent} semi-abelian category~\cite{CGVdL15}. We will not use this definition directly but the following characterisation instead.

\begin{theorem}[{\cite[Theorem~3.8]{MVdL12}}]
    \label{thm:(SM) = (SH)}
    For a semi-abelian category, the following conditions are equivalent:
    \begin{tfae}
        \item[\SM{}] every star-multiplicative graph is multiplicative,
        \item[\SH{}] two (effective) equivalence relations commute if and only if their normalisations commute.\qed
    \end{tfae}
\end{theorem}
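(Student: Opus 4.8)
The plan is to read both conditions as statements about the two kernel pairs of a reflexive graph, and then to let \SH{} translate one into the other. Throughout, fix a reflexive graph $(C_1,C_0,d,c,e)$ and write $\Eq(d)$, $\Eq(c)$ for the effective equivalence relations on $C_1$ given by the kernel pairs of $d$ and $c$. Since $\C$ is pointed, the normalisations of these two relations are exactly the kernels $X=\Ker(d)$, with inclusion $k$, and $\Ker(c)$; this is the observation that makes the whole argument go through, so I would record it at the outset.

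The core of the proof is a pair of dictionaries. On the \emph{global} side, I would use the classical identification, valid in any semi-abelian (indeed Mal'tsev) category, of internal categories with pregroupoids and of internal groupoids with multiplicative graphs \cite{CPP92,BB04}: a reflexive graph is multiplicative if and only if $\Eq(d)$ and $\Eq(c)$ admit a connector, i.e.\ commute in the Smith--Pedicchio sense \cite{BG02b}. The multiplication $m$ of \cref{def:(star-)mult graph} and the connector determine one another. On the \emph{local} side, I would show that the graph is star-multiplicative if and only if the normal monomorphisms $X=\Ker(d)$ and $\Ker(c)$ commute in the Huq sense inside $C_1$: the star-multiplication $\zeta$, whose two defining equations say that its restrictions along $(k,0)$ and $(e\comp c\comp k,1_X)$ are identities, is precisely the datum of a cooperator of these two kernels. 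This is the point where the relative-commutator viewpoint of \cref{sec:commutators} enters, via \cref{prop:rel commutativity} and the commutator reformulation of star-multiplicativity in \cref{rem:star-mult commut}. Granting the two dictionaries, \SH{} applied to the pair $(\Eq(d),\Eq(c))$ reads: these relations commute if and only if their normalisations $X$ and $\Ker(c)$ commute, that is, multiplicative $\Leftrightarrow$ star-multiplicative. Since multiplicative always implies star-multiplicative (\cref{rem:mult implies star-mult}), this gives $\SH{}\Rightarrow\SM{}$.

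For the converse $\SM{}\Rightarrow\SH{}$, I would first note that the ``commute $\Rightarrow$ normalisations commute'' half of \SH{} holds in every semi-abelian category, a connector restricting to a cooperator on normalisations; so only the hard implication remains. Given effective equivalence relations with Huq-commuting normalisations, I would realise them as $\Eq(d)$ and $\Eq(c)$ of a suitable reflexive graph: the commuting of the normalisations then makes this graph star-multiplicative by the local dictionary, \SM{} upgrades it to multiplicative, and the global dictionary returns a connector, so the original relations commute. This completes the equivalence, which is the content of \cite[Theorem~3.8]{MVdL12}.

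The step I expect to be the main obstacle is exactly this realisation inside $\SM{}\Rightarrow\SH{}$: an arbitrary pair of effective equivalence relations lives on a common object $Z$, whereas the maps $d$ and $c$ of a reflexive graph must share both a codomain and a section, so one cannot simply take $C_1=Z$. Making precise a construction (or a reduction lemma showing that \SH{} may be tested only on pairs arising as kernel pairs of a reflexive graph) that reproduces $R$ and $S$ while matching normalisations and commutativity on the nose is the delicate part. A secondary technical point is verifying the local dictionary carefully, matching the pullback $C_1\times_{C_0}X$ and the two sections $(k,0)$, $(e\comp c\comp k,1_X)$ against the cooperator conditions for $X$ and $\Ker(c)$.
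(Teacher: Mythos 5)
Note first that the paper itself contains no proof of this statement: it is quoted from \cite[Theorem~3.8]{MVdL12} with the proof omitted, so your attempt can only be measured against the cited proof. Your architecture for \SH{} \(\Rightarrow\) \SM{} is the expected one, and your ``global dictionary'' (multiplicative \(\Leftrightarrow\) \(\Eq(d)\) and \(\Eq(c)\) admit a connector, i.e.\ Smith-commute) is indeed valid in any Mal'tsev category via \cite{CPP92} and \cite{BG02b}. But your ``local dictionary''---star-multiplicative \(\Leftrightarrow\) the normalisations \(\Ker(d)\) and \(\Ker(c)\) Huq-commute in \(C_1\)---is asserted rather than proved, and it is not the ``secondary technical point'' you make it out to be. The star-multiplication \(\zeta\) lives on the pullback \(C_1 \times_{C_0} X\), not on a product \(X \times \Ker(c)\), and the commutator condition it literally encodes is the \emph{twisted} one \(\Commut{1_X}{1_X}_\xi = 0\), for the action \(\xi\) of \(X\) on \(X\) coming from the split extension \(0 \to X \to C_1 \times_{C_0} X \to X \to 0\) (see \cref{rem:star-mult commut}); equivalently, it is the Peiffer condition (\cref{thm:charact cross mod}, \cite[Theorem~5.3]{MM10}). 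Passing from that to the Huq commutator \(\Commut{\Ker(d)}{\Ker(c)} = 0\) is substantive: in \(\Grp\) it is an elementwise computation with the Peiffer identity, but categorically there is not even a canonical morphism \(X \to \Ker(c)\) to restrict along, and establishing this comparison is precisely where much of the weight of the cited proof sits. A sketch that treats it as bookkeeping with the sections \((k,0)\) and \((e \comp c \comp k, 1_X)\) has not engaged with the actual difficulty.

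The larger gap is the one you flag yourself, and it is worse than ``delicate'': the naive realisation is provably impossible. If \(R = \Eq(d)\) and \(S = \Eq(c)\) for a reflexive graph, then both relations are kernel pairs of \emph{split} epimorphisms to a common codomain with a common section; since a kernel pair determines its regular-epimorphic quotient up to isomorphism, an effective equivalence relation can arise this way only if its quotient map splits---already in \(\Grp\), the kernel pair of any non-split surjection fails this. So \SM{} is, a priori, \SH{} restricted to a genuinely special class of pairs, and the whole content of \SM{} \(\Rightarrow\) \SH{} is a reduction lemma showing that \SH{} may be tested on such pairs (or some other construction producing, from an arbitrary pair \((R,S)\) with Huq-commuting normalisations, a star-multiplicative-but-not-multiplicative graph whenever \([R,S] \neq 0\)). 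Your proposal names this obstacle but supplies neither a construction nor an argument, so the proof is incomplete exactly at its central point. (The one unconditional ingredient you invoke---Smith commutativity always implies Huq commutativity of the normalisations---is correct, by \cite{BG02b}.)
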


Indeed, since an internal crossed module is an internal precrossed module that is also a groupoid and, as said earlier, in a semi-abelian context, multiplicative graphs coincide with groupoids, we see that the condition \SM{} might be useful for our desired characterisation. We now have all the necessary tools for its proof.

\begin{theorem}
    \label{thm:charact cross mod}
    In a semi-abelian category, let the couple \((\del,\xi)\) be an internal precrossed module with~\eqref{diagr:SES xi} the SSES associated to \(\xi\). Then \((\del,\xi)\) satisfies the so-called \emph{Peiffer Condition \PFF{}}\@, which is the equality \(\Commut{k}{e \comp \del}_{\conjact{X}{X}} = 0\), if and only if the reflexive graph \((X \rtimes_\xi B, B, d, c, e)\) is star-multiplicative.

    In particular, by \cref{rem:mult implies star-mult} and \cref{thm:(SM) = (SH)}, this implies that, if \((\del,\xi)\) is an internal crossed module, then \PFF{} is satisfied and, conversely, if the category \(\C\) satisfies \SH{}\@, then the condition \PFF{} implies that \((\del,\xi)\) is an internal crossed module.
\end{theorem}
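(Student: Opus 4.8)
The plan is to show that both the Peiffer condition and star-multiplicativity are equivalent to one and the same identity between internal actions of \(X\) on itself, namely \(\xi \comp (\del \flat 1_X) = \conjact{X}{X}\), after which the stated equivalence is immediate. Throughout I write \(C_1 \Def X \rtimes_\xi B\) and use that, since \((\del,\xi)\) is a precrossed module, \(c \comp k = \del\); in particular the pullback \(C_1 \times_{C_0} X\) appearing in \cref{def:(star-)mult graph} (with \(C_0 = B\)) is the pullback of \(d\) along \(\del\).

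On the star-multiplicativity side I would start from \cref{rem:star-mult commut}: the graph \((C_1, B, d, c, e)\) is star-multiplicative if and only if \(\Commut{1_X}{1_X}_{\xi'} = 0\), where \(\xi'\) is the action of \(X\) on \(X\) whose induced SSES has inclusion \((k,0)\), projection \(\pi_1\) and section \((e \comp \del, 1_X)\). Applying \cref{prop:rel commutativity} and then \cref{lemma:equivariance => commutativity} to the pair \((1_X, 1_X)\), and using \(1_X \flat 1_X = 1_{X \flat X}\), the equivariance square has both vertical legs equal to the identity, so its commutativity collapses to \(\xi' = \conjact{X}{X}\). The key observation is that \(\xi' = \xi \comp (\del \flat 1_X)\): the triple \((1_X, \pi_0, \del)\), with \(\pi_0 \from C_1 \times_{C_0} X \to C_1\) the first projection, is a morphism of SSESs from the sequence defining \(\xi'\) to~\eqref{diagr:SES xi}, since \(\pi_0 \comp (k,0) = k\), \(\pi_0 \comp (e \comp \del, 1_X) = e \comp \del\), and the square \(d \comp \pi_0 = \del \comp \pi_1\) is exactly the defining equation of the pullback. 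Under the equivalence between actions and SSESs recalled in \cref{sec:twisted commut}, this morphism encodes the equivariance \(\xi' = \xi \comp (\del \flat 1_X)\). Hence star-multiplicativity is equivalent to \(\xi \comp (\del \flat 1_X) = \conjact{X}{X}\).

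On the Peiffer side, \cref{prop:rel commutativity} reduces \(\Commut{k}{e \comp \del}_{\conjact{X}{X}} = 0\) to the \(\conjact{X}{X}\)-commutativity of \(k\) and \(e \comp \del\). I would then apply \cref{coroll:commutativity => equivariance} with the first action taken to be \(\conjact{X}{X}\) (acting object \(X\), with kernel \((1_X,0)\) and section \(\diag_X\)) and the second action taken to be \(\xi\) (so its semi-direct product is \(C_1\), with kernel \(k\) and section \(e\)). Writing \(k = k \comp 1_X\) and \(e \comp \del = e \comp \del\), the corollary identifies this commutativity with the equivariance of \(1_X \from X \to X\) and \(\del \from X \to B\) with respect to \(\conjact{X}{X}\) and \(\xi\), which is once more the identity \(\xi \comp (\del \flat 1_X) = \conjact{X}{X}\). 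Comparing the two paragraphs yields the equivalence between \PFF{} and star-multiplicativity, and the final ``in particular'' statements then follow by combining it with \cref{rem:mult implies star-mult} and \cref{thm:(SM) = (SH)}.

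The step I expect to be the crux is the identification \(\xi' = \xi \comp (\del \flat 1_X)\): one must recognise the star-multiplication pullback \(C_1 \times_{C_0} X\) as the semi-direct product of \(\xi\) restricted along \(\del\), so that the two a priori unrelated equivariance conditions produced by the two sides become literally the same equation of actions \(X \flat X \to X\). Once this is in place, both halves of the theorem are routine applications of the commutativity/equivariance dictionary of \cref{sec:commutators}.
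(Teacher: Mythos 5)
Your proposal is correct, but it takes a genuinely different route from the paper's own proof. The paper argues directly at the level of split extensions: given \PFF{}, the cooperator \(\varphi \from X \times X \to X \rtimes_\xi B\) fits into a morphism of SSESs whose right-hand square is a pullback by \cite[Lemma~4.2.5]{BB04}, so that \(X \times X\) (with projections \(\varphi\) and \(\pi_2\)) \emph{is} the pullback of \cref{def:(star-)mult graph} and \(\pi_1\) is the star-multiplication; conversely, from a star-multiplication \(\zeta\) it applies the Split Short Five Lemma to show \((\zeta,p_1)\) is an isomorphism and extracts the cooperator as \(p_0 \comp (\zeta,p_1)^{-1}\). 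You instead reduce both conditions to the single identity of actions \(\xi \comp (\del \flat 1_X) = \conjact{X}{X}\): on the Peiffer side via \cref{coroll:commutativity => equivariance} (which is exactly the reformulation the paper records in \cref{rem:Peiffer graph}), and on the star-multiplicativity side via \cref{rem:star-mult commut} together with your identification \(\xi' = \xi \comp (\del \flat 1_X)\) of the action governing the pullback split extension. In effect you have carried out the ``alternative proof'' that the paper itself sketches in \cref{rem:Peiffer graph}, where the coincidence of Peiffer graphs and star-multiplicative graphs is delegated to \cite[Theorem~5.3]{MM10}---except that you prove that step yourself rather than citing it. Your crux is sound: the triple \((1_X,\pi_0,\del)\) does satisfy all three conditions for a morphism of SSESs (the precrossed-module identity \(c \comp k = \del\) being what makes the pullback square read \(d \comp \pi_0 = \del \comp \pi_1\)), and the morphism-level content of the actions/SSESs equivalence of \cite{BJ98} converts it into the claimed equality of actions; if you prefer a self-contained verification, the formula \(k \comp \xi = \CoindArr{e,k} \comp \kerb{B}{X}\) from the proof of \cref{lemma:conj semi-dir prod} gives \(k \comp \xi \comp (\del \flat 1_X) = \CoindArr{e \comp \del, k} \comp \kerb{X}{X} = k \comp \xi'\), and one cancels the monomorphism \(k\). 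As for what each approach buys: the paper's proof produces the star-multiplication and the cooperator explicitly (concrete data, at the price of the Short Five Lemma and a pullback recognition), whereas yours is more conceptual---it makes transparent \emph{why} the two conditions coincide, namely that both are the Peiffer-graph square---at the price of leaning on the equivariance dictionary of \cref{sec:commutators} and on the split-extension structure established in \cref{rem:star-mult commut}.
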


\begin{proof}
    Let us recall that
    \[
        \begin{tikzcd}
            0 \arrow[r] & X \arrow[r, "{(1_X,0)}"] & X \times X \arrow[r, "\pi_2", shift left] & X \arrow[l, "\diag_X", shift left] \arrow[r] & 0
        \end{tikzcd}\text{,}
    \]
    where \(\pi_2\) is the projection on the second factor and \(\diag_X\) is the diagonal of \(X\), is the SSES associated to \(\conjact{X}{X}\).

    Let us start with \(\Commut{k}{e \comp \del}_{\conjact{X}{X}} = 0\). By \cref{prop:rel commutativity}, this gives us an arrow \(\varphi \from X \times X \to X \rtimes_\xi B\) such that
    \[
        \begin{tikzcd}
            X \arrow[r, "{(1_X,0)}"] \arrow[rd, "k"'] & X \times X \arrow[d, "\varphi" description, dotted] & X \arrow[l, "\diag_X"'] \arrow[ld, "e \comp \del"] \\
            & X \rtimes_\xi B
        \end{tikzcd}
    \]
    commutes. Then this arrow \(\varphi\) makes
    \[
        \begin{tikzcd}
            0 \arrow[r] & X \arrow[r, "{(1_X,0)}"] \arrow[d, equal] & X \times X \arrow[r, "\pi_2", shift left] \arrow[d, "\varphi", dotted] & X \arrow[r] \arrow[d, "\del"] \arrow[l, "\diag_X", shift left] & 0 \\
            0 \arrow[r] & X \arrow[r, "k"] & X \rtimes_\xi B \arrow[r, "d", shift left] & B \arrow[r] \arrow[l, "e", shift left] & 0
        \end{tikzcd}
    \]
    commute. Indeed, we must just check that \(d \comp \varphi = \del \comp \pi_2\) and we see that this is the case by precomposing with \((1_X,0)\) and \(\diag_X\) which are jointly epic. Therefore, the right-hand square whose the horizontal arrows point to the right is a pullback, since we have an isomorphism on the left~\cite[Lemma~4.2.5]{BB04} so this square is the pullback mentioned in the definition of a star-multiplicative graph (see \cref{def:(star-)mult graph}). (Note that, by definition of precrossed module, we do indeed have \(\del = c \comp k\).) Finally, we check that \(\pi_1 \from X \times X \to X\) is the needed star-multiplication: we have the commutative diagram
    \[
        \begin{tikzcd}
            X \arrow[r, "{(1_X,0)}"] \arrow[rd, equal] & X \times X \arrow[d, "\pi_1"] & X \arrow[l, "\diag_X"'] \arrow[ld, equal] \\
            & X
        \end{tikzcd}
    \]
    and we easily check that \((1_X,0)\) and \(\diag_X\) are the required arrows (induced by the universal property of the pullback) in \cref{diagr:star-mult}.

    Conversely, if we have a star-multiplication \(\zeta \from (X \rtimes_\xi B) \times_B X \to X\) on the reflexive graph associated to our precrossed module, with \((X \rtimes_\xi B) \times_B X \to X\) being constructed as the pullback
    \[
        \begin{tikzcd}[squared=5.5]
            (X \rtimes_\xi B) \times_B X \pb{rd} \arrow[r, "p_1"] \arrow[d, "p_0"'] & X \arrow[d, "c \comp k = \del"] \\
            X \rtimes_\xi B \arrow[r, "d"'] & C_0
        \end{tikzcd}\text{,}
    \]
    then, by \cref{prop:rel commutativity}, we must show that \(\Commut{k}{e \comp \del}_{\conjact{X}{X}} = 0\), i.e.\ that there is an arrow \(\varphi \from X \times X \to X \rtimes_\xi B\) that makes
    \begin{eqdiagr}
        \label{diagr:wanted varphi}
        \begin{tikzcd}
            X \arrow[r, "{(1_X,0)}"] \arrow[rd, "k"'] & X \times X \arrow[d, "\varphi" description, dotted] & X \arrow[l, "\diag_X"'] \arrow[ld, "e \comp \del"] \\
            & X \rtimes_\xi B
        \end{tikzcd}
    \end{eqdiagr}
    commute. As a reminder, the star-multiplication \(\zeta\) is defined as the arrow making the following diagram commute.
    \[
        \begin{tikzcd}[sep=large]
            X \arrow[r, "{(k,0)}"] \arrow[rd, equal] & (X \rtimes_\xi B) \times_B X \arrow[d, "\zeta"] & X \arrow[l, "{(e \comp \del, 1_X)}"'] \arrow[ld, equal] \\
            & X
        \end{tikzcd}
    \]
    The first implication of this proposition tells us that, if we have a crossed module, then the product of \(X\) with itself coincides (up to isomorphism) with the pullback \((X \rtimes_\xi B) \times_B X\) (with different projections) and that the wanted arrow \(\varphi\) is the first projection of this pullback, so let us show that it is also the case here. Consider the following diagram.
    \[
        \begin{tikzcd}
            X \arrow[r, "{(k,0)}"] \arrow[d, equal] & (X \rtimes_\xi B) \times_B X \arrow[r, "p_1", shift left] \arrow[d, "{(\zeta,p_1)}"] & X \arrow[l, "{(e \comp \del,1_X)}", shift left] \arrow[d, equal] \\
            X \arrow[r, "{(1_X,0)}"'] & X \times X \arrow[r, "\pi_2", shift left] & X \arrow[l, "\diag_X", shift left]
        \end{tikzcd}
    \]
    We check that this diagram commutes and that both rows are (split) exact sequences. The arrow \((k,0)\) is indeed a kernel of \(p_1\) since \(k\) is the kernel of \(d\) and \(p_1\) is the pullback of~\(d\). Therefore, we can apply the (Split) Short Five Lemma~\cite[Theorem~4.1.10]{BB04} to deduce that \((\zeta,p_1)\) is an isomorphism. Finally, we see that \(\varphi = p_0 \comp (\zeta,p_1)^{-1}\) is the wanted arrow making \cref{diagr:wanted varphi} commute, which finishes the proof.
\end{proof}

\begin{remark}
    \label{rem:Peiffer graph}
    Let us justify our terminology by remarking that, by \cref{coroll:commutativity => equivariance} and \cref{prop:rel commutativity}, the Peiffer Condition \(\Commut{k}{e \comp \del}_{\conjact{X}{X}} = 0\) is equivalent to the commutativity of the square
    \[
        \begin{tikzcd}[squared]
            X \flat X \arrow[r, "\conjact{X}{X}"] \arrow[d, "\del \flat 1_X"'] & X \arrow[d, equal] \\
            B \flat X \arrow[r, "\xi"'] & X
        \end{tikzcd}
    \]
    which (via the equivalence between actions and SSESs) corresponds to the definition of Peiffer graph of \cite[Definition~5.2]{MM10}. Moreover, it was already shown in \cite[Theorem~5.3]{MM10} that Peiffer graphs and star-multiplicative graphs coincide, so this gives an alternative proof for \cref{thm:charact cross mod}.
\end{remark}

These two theorems combine to:
\begin{corollary}
    \label{cor:charact cross mod combined}
    In a semi-abelian category satisfying \SH{}\@, let \(\del \from X \to B\) be an arrow and \(\xi\) an action of \(B\) on \(X\) inducing the SSES~\eqref{diagr:SES xi}. The couple \((\del,\xi)\) is an internal crossed module if and only if the twisted commutators \(\Commut{\del}{1_B}_\xi\) and \(\Commut{k}{e \comp \del}_{\conjact{X}{X}}\) vanish.\qed
\end{corollary}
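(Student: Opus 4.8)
The plan is to obtain this corollary as a direct combination of \cref{thm:charact precross mod} and \cref{thm:charact cross mod}, so that no genuinely new argument is required beyond arranging the logical dependencies correctly. The one point to keep in mind throughout is that \cref{thm:charact cross mod} is stated for a couple $(\del,\xi)$ that is \emph{already} an internal precrossed module; accordingly, in each direction I must first secure precrossedness (via the vanishing of $\Commut{\del}{1_B}_\xi$) before I am entitled to invoke it.

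For the forward implication, I would assume that $(\del,\xi)$ is an internal crossed module. Since an internal crossed module is in particular an internal precrossed module, \cref{thm:charact precross mod} immediately yields $\Commut{\del}{1_B}_\xi = 0$, that is, \PCM{} holds. For the second commutator I simply appeal to the ``in particular'' clause of \cref{thm:charact cross mod}, which records that a crossed module satisfies \PFF{}; concretely, the associated reflexive graph is a groupoid, hence multiplicative and thus star-multiplicative by \cref{rem:mult implies star-mult}, whence $\Commut{k}{e \comp \del}_{\conjact{X}{X}} = 0$ by \cref{thm:charact cross mod}.

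For the converse, assume that both twisted commutators vanish. From $\Commut{\del}{1_B}_\xi = 0$ together with \cref{thm:charact precross mod} I first conclude that $(\del,\xi)$ is an internal precrossed module, which legitimises applying \cref{thm:charact cross mod} to it. The hypothesis $\Commut{k}{e \comp \del}_{\conjact{X}{X}} = 0$ is precisely \PFF{}, so that theorem tells me the reflexive graph $(X \rtimes_\xi B, B, d, c, e)$ is star-multiplicative. This is the only place the hypothesis \SH{} is used: through \cref{thm:(SM) = (SH)} it supplies \SM{}, upgrading the star-multiplicative graph to a multiplicative one, hence to a groupoid, which means exactly that $(\del,\xi)$ is an internal crossed module. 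There is no real obstacle here; the only things to be careful about are the order of the two steps in each direction---precrossedness must be established before \cref{thm:charact cross mod} can be quoted---and the observation that \SH{} is needed for the backward implication alone, mirroring the asymmetry already present in the statement of \cref{thm:charact cross mod}.
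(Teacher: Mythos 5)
Your proof is correct and takes essentially the same route as the paper: the corollary carries a \(\qed\) precisely because it is the direct combination of \cref{thm:charact precross mod} and \cref{thm:charact cross mod}, with \SH{} entering only through \cref{thm:(SM) = (SH)} in the converse direction. Your bookkeeping---establishing precrossedness from \(\Commut{\del}{1_B}_\xi = 0\) before quoting \cref{thm:charact cross mod}---is exactly the point the paper leaves implicit.
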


\subsection{Two Special Cases of Crossed Modules}
Let us see what this characterisation amounts to in two well-known special types of crossed modules: those \((\del,\xi)\) where \(\del\) is a monomorphism and the ones where \(\del\) is a regular epimorphism.

\subsubsection*{Monic \(\del\)}
We start with the case where \(\del\) is monic. Then \((\del,\xi)\) is a crossed module if and only if \(\del \from X \to B\) is the inclusion of a normal subobject \(X\) of \(B\) and \({\xi \from B \flat X \to X}\) is the action of conjugation of \(B\) on \(X\), i.e.\ the unique action making
\begin{eqdiagr}
    \label{diagr:conj action}
    \begin{tikzcd}[squared]
        B \flat X \arrow[r, "\xi"] \arrow[d, "B \flat \del"'] & X \arrow[d, "\del"] \\
        B \flat B \arrow[r, "\conjact{B}{B}"'] & B
    \end{tikzcd}
\end{eqdiagr}
commute~\cite[Proposition~7.2]{JMU07}. Note that the uniqueness follows from the fact that \(\del\) is monic. The given reference states the equivalence for precrossed modules but, as we will see below, any precrossed module \((\del,\xi)\) with \(\del\) monic is automatically a crossed module.

Let us first assume that \((\del,\xi)\) is just a precrossed module. By \cref{thm:charact precross mod}, this is equivalent to the triviality of the commutator \(\Commut{\del}{1_B}_\xi\). Moreover, by \cref{prop:rel commutativity} and \cref{lemma:equivariance => commutativity}, we can reformulate this condition as the equivariance of~\(\del\) and \(1_B\) with respect to \(\xi\), which means exactly that \(\xi\) is the conjugation action of \(B\) on \(X\) (see \eqref{diagr:conj action}). Finally, the existence of such a conjugation action is equivalent to the normality of the subobject \(\del \from X \to B\) (\cite[Corollary~2.3]{JMU09} combined with \cite[Proposition~p.~382]{JMT02}---as a reminder, semi-abelian categories are normal so we can indeed apply \cite[Proposition~p.~382]{JMT02} on the square of \cite[Corollary~2.3.(b)]{JMU09}), which is what was announced in the previous paragraph.

Thus we already know that a couple \((\del,\xi)\) with \(\del\) monic is a precrossed module if and only if \(\del\) is the inclusion of a normal subobject. Now, let us see what tells us the Peiffer Condition, i.e.\ the triviality of \(\Commut{k}{e \comp \del}_{\conjact{X}{X}}\) (see \cref{thm:charact cross mod}). Using again \cref{prop:rel commutativity} and \cref{lemma:equivariance => commutativity}, this condition is equivalent to the commutativity of the diagram
\[
    \begin{tikzcd}[sep=huge]
        X \flat X \arrow[r, "\conjact{X}{X}"] \arrow[d, "(e \comp \del) \flat k"'] & X \arrow[d, "k"] \\
        (X \rtimes_\xi B) \flat (X \rtimes_\xi B) \arrow[r, "\conjact{X \rtimes_\xi B}{X \rtimes_\xi B}"'] & X \rtimes_\xi B
    \end{tikzcd}
\]
which can be decomposed as
\[
    \begin{tikzcd}[sep=huge]
        X \flat X \arrow[r, "\conjact{X}{X}"] \arrow[d, "\del \flat 1_X"'] & X \arrow[d, equal] \\
        B \flat X \arrow[r, "\xi"] \arrow[d, "e \flat k"'] & X \arrow[d, "k"] \\
        (X \rtimes_\xi B) \flat (X \rtimes_\xi B) \arrow[r, "\conjact{X \rtimes_\xi B}{X \rtimes_\xi B}"'] & X \rtimes_\xi B
    \end{tikzcd}\text{.}
\]
Since we already showed that \(\xi\) is a conjugation action, the commutativity of the top square is quite expected. More formally, let us show that these two arrows from \(X \flat X\) to \(X\) are equal by postcomposing them with \(\del\), which is monic by assumption. Using the fact that \(\xi\) is the conjugation action of \(B\) on \(X\) as well as the explicit formula for conjugation actions from \cref{ex:conj action}, we compute
\[
    \begin{split}
        \del \comp \bigl(\xi \comp (\del \flat 1_X)\bigr) &= \bigl(\conjact{B}{B} \comp (B \flat \del)\bigr) \comp (\del \flat 1_X) = \conjact{B}{B} \comp (\del \flat \del) \\
        &= (\CoindArr{1_B,1_B} \comp \kerb{B}{B}) \comp (\del \flat \del) = \CoindArr{1_B,1_B} \comp \bigl((\del+\del) \comp \kerb{X}{X}\bigr) \\
        &= \CoindArr{\del,\del} \comp \kerb{X}{X} = \del \comp (\CoindArr{1_X,1_X} \comp \kerb{X}{X}) = \del \comp \conjact{X}{X}
    \end{split}
\]
as wanted. Finally, the bottom square commutes by \cref{lemma:conj semi-dir prod}. In conclusion, we showed that, if \(\del\) is a monomorphism, precrossed modules and crossed modules coincide and correspond to the normal subobjects. (We need the condition \SH{} in order to apply \cref{thm:charact cross mod} for the crossed modules but this characterisation of normal subobjects remains true in an arbitrary semi-abelian category. Indeed, if we have a precrossed module with \(\del \from X \to B\) monic, the corresponding reflexive graph is a reflexive relation by \cref{lemma:refl rel iff norm monic} so it is an equivalence relation since our category is Mal'tsev~\cite[Proposition~5.1.2]{BB04}. Therefore, it is transitive and this gives us a multiplication, making our precrossed module an internal groupoid, hence a crossed module.)

\subsubsection*{Regularly epic \(\del\)}
For our second example, let us assume that \(\del\) is a regular epimorphism. In this case, it is known~\cite[Corollary~3.1]{BG02} (combined with \cite[Remark~3.1]{BG02}, \cite[Proposition~2.2]{GVdL08} and \cref{lemma:conn refl graph iff norm reg epi}) that these crossed modules correspond to \emph{central extensions}. Such is a regular epimorphism \(\del \from X \to B\) whose kernel \(\kappa \from K \to X\) Huq-commutes with its domain \(1_X\) or, in other words, for which there exists an arrow \(\psi \from K \times X \to X\) making the diagram below commute.
\[
    \begin{tikzcd}[sep = large]
        K \arrow[r, "{(1_K,0)}"] \arrow[rd, "\kappa"'] & K \times X \arrow[d, "\psi" description, dotted] & X \arrow[l, "{(0,1_X)}"'] \arrow[ld, equal] \\
        & X
    \end{tikzcd}
\]
This time, we will need \SH{} for our proof---note, however, that this result nevertheless holds in an arbitrary semi-abelian category, see the references mentioned above.

Let us now examine how the characterisation mentioned above express itself in terms of commutator conditions. First, the fact that a crossed module \((\del,\xi)\) with \(\del\) being a regular epimorphism is a central extension follows directly from the following lemma.

\begin{lemma}
    \label{lemma:kappa central}
    For a crossed module \((\del,\xi)\), the kernel \(\kappa \from K \to X\) and the domain \(1_X\) of \(\del\) commute (i.e.\ \(\kappa\) is central).
\end{lemma}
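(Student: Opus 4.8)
The plan is to read off centrality of $\kappa$ from the Peiffer Condition, using the equivariance reformulation that is already available. Since $(\del,\xi)$ is a crossed module, \cref{thm:charact cross mod} gives $\Commut{k}{e \comp \del}_{\conjact{X}{X}} = 0$, which by \cref{rem:Peiffer graph} is precisely the Peiffer identity $\conjact{X}{X} = \xi \comp (\del \flat 1_X)$ of arrows $X \flat X \to X$. To show that $\kappa$ and $1_X$ Huq-commute, I would exhibit this as commutativity twisted by the \emph{trivial} action $\tau \from K \flat X \to X$ of $K$ on $X$. Indeed, by \cref{rem:triv twisted commut} and \cref{ex:triv action}, the cospan associated to $\tau$ is exactly the Huq cospan \begin{tikzcd}[cramped, sep=small] X \arrow[r, "{(1_X,0)}"] & X \times K & K \arrow[l, "{(0,1_K)}"']\end{tikzcd}, so the $\tau$-commutativity of $1_X \from X \to X$ and $\kappa \from K \to X$ is literally the statement that $\kappa$ is central.

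By \cref{lemma:equivariance => commutativity} (with acting object $K$, acted-on object $X$ and codomain $X$), this relative commutativity holds if and only if the equivariance square
\[
    \begin{tikzcd}[squared]
        K \flat X \arrow[r, "\tau"] \arrow[d, "\kappa \flat 1_X"'] & X \arrow[d, equal] \\
        X \flat X \arrow[r, "\conjact{X}{X}"'] & X
    \end{tikzcd}
\]
commutes, i.e.\ $\conjact{X}{X} \comp (\kappa \flat 1_X) = \tau$. It is essential to let $K$ be the \emph{acting} object: the opposite orientation (with $X$ acting and $\kappa$ in the acted-on slot) would require conjugation by an arbitrary element of $X$ to be controlled by $\del$, which it is not. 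With this orientation, the Peiffer identity together with functoriality of $\flat$ and the defining equality $\del \comp \kappa = 0$ yield
\[
    \conjact{X}{X} \comp (\kappa \flat 1_X) = \xi \comp (\del \flat 1_X) \comp (\kappa \flat 1_X) = \xi \comp \bigl((\del \comp \kappa) \flat 1_X\bigr) = \xi \comp (0 \flat 1_X)\text{,}
\]
where $0 \from K \to B$ is the zero arrow.

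It thus remains to identify $\xi \comp (0 \flat 1_X)$ with the trivial action $\tau = \CoindArr{0,1_X} \comp \kerb{K}{X}$, and this is the one step I expect to carry real content; everything else is formal. Conceptually it is the fact that restricting an action along the zero morphism produces the trivial action—equivalently, that pulling back the split extension~\eqref{diagr:SES xi} along $0 \from K \to B$ gives the product extension. Concretely, I would verify it from the construction of the induced SSES in \cref{diagr:induced SSES}: precomposing the relation $k \comp \xi = q \comp \kerb{B}{X}$ with $0 \flat 1_X$ and invoking naturality of $\kerb{-}{-}$ gives $k \comp \xi \comp (0 \flat 1_X) = q \comp (0 + 1_X) \comp \kerb{K}{X}$; since $q \comp \iota_2 = k$ while the $\iota_1$-component is a zero composite, one gets $q \comp (0 + 1_X) = k \comp \CoindArr{0,1_X}$, and cancelling the monomorphism $k$ leaves $\xi \comp (0 \flat 1_X) = \CoindArr{0,1_X} \comp \kerb{K}{X} = \tau$. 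This closes the equivariance square and hence proves that $\kappa$ is central.
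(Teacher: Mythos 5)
Your proof is correct, and its overall strategy coincides with the paper's: show that the action of \(K\) on \(X\) obtained by restricting conjugation along \(\kappa\) is the trivial action, then convert that equivariance square into Huq-commutativity of \(\kappa\) and \(1_X\) via \cref{lemma:equivariance => commutativity} and \cref{rem:triv twisted commut}. Where you diverge is in which reformulation of the Peiffer Condition you compose with \(\kappa \flat 1_X\). The paper uses the square \(k \comp \conjact{X}{X} = \conjact{X \rtimes_\xi B}{X \rtimes_\xi B} \comp \bigl((e \comp \del) \flat k\bigr)\) (from \cref{thm:charact cross mod} plus \cref{lemma:equivariance => commutativity}) and then evaluates the semi-direct-product conjugation by its explicit formula \(\CoindArr{1,1} \comp \kerb{X \rtimes_\xi B}{X \rtimes_\xi B}\); you instead use the Peiffer-graph square \(\conjact{X}{X} = \xi \comp (\del \flat 1_X)\) of \cref{rem:Peiffer graph}, which reduces everything to checking that \(\xi \comp (0 \flat 1_X)\) is the trivial action of \(K\) on \(X\). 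Your verification of that step from \cref{diagr:induced SSES}---\(k \comp \xi = q \comp \kerb{B}{X}\), naturality of \(\kerb{-}{-}\), the identity \(q \comp (0 + 1_X) = k \comp \CoindArr{0,1_X}\), then cancelling the monomorphism \(k\)---is sound, and it is in substance the same collapse (the relation \(\del \comp \kappa = 0\) killing the first coproduct component, followed by cancellation of \(k\)) that the paper performs inline on the conjugation of \(X \rtimes_\xi B\). Your packaging buys two small things: it isolates a reusable general fact, namely that restricting an action along a zero morphism yields the trivial action (equivalently, pulling back a split extension along \(0\) gives the product extension); and, since you apply \cref{lemma:equivariance => commutativity} directly to the trivial action \(\tau\) rather than first naming the composite \(\conjact{X}{X} \comp (\kappa \flat 1_X)\) an ``induced action'', you sidestep the (harmless) wrinkle in the paper's proof of having to know that this composite is an action before the computation identifies it with \(\tau\). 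Like the paper's argument---a point the paper makes only in the remark that follows the lemma---your route uses only the condition \PFF{} and never \PCM{}, since \cref{rem:Peiffer graph} does not invoke the precrossed module structure.
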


\begin{proof}
    Let us consider the following commutative diagram
    \[
        \begin{tikzcd}[sep=large]
            K \flat X \arrow[rr, "\zeta"] \arrow[d, "\kappa \flat 1_X"'] & & X \arrow[d, equal] \\
            X \flat X \arrow[rr, "\conjact{X}{X}"] \arrow[d, "(e \comp \del) \flat k"'] & & X \arrow[d, "k"] \\
            (X \rtimes_\xi B) \flat (X \rtimes_\xi B) \arrow[rr, "\conjact{X \rtimes_\xi B}{X \rtimes_\xi B}"'] & & (X \rtimes_\xi B)
        \end{tikzcd}
    \]
    where \(\zeta \Def \conjact{X}{X} \comp (\kappa \flat 1_X)\) is the induced action of \(K\) on \(X\) and the bottom square commutes by \cref{thm:charact cross mod} combined with \cref{lemma:equivariance => commutativity}. We will show that \(\zeta\) is the trivial action of \(K\) on \(X\), which will finish the proof since the commutativity of the top square then  means that \(\kappa\) and \(1_X\) are equivariant with respect to the trivial action \(\zeta\) and the conjugation action \(\conjact{X}{X}\), i.e.\ that \(\kappa\) and \(1_X\) commute by \cref{lemma:equivariance => commutativity} (remember that Huq-commutativity corresponds to commutativity relative to the trivial action: see \cref{rem:triv twisted commut}).

    We compute
    \[
        \begin{split}
            k \comp \zeta &= \conjact{(X \rtimes_\xi B)}{(X \rtimes_\xi B)} \comp \Bigl(\bigl((e \comp \del) \flat k\bigr) \comp (\kappa \flat 1_X)\Bigr) \\
            &= (\CoindArr{1_{X \rtimes_\xi B},1_{X \rtimes_\xi B}} \comp \kerb{X \rtimes_\xi B}{X \rtimes_\xi B}) \comp (0 \flat k) \\
            &= \CoindArr{1_{X \rtimes_\xi B},1_{X \rtimes_\xi B}} \comp \bigl((0+k) \comp \kerb{K}{X}\bigr) = \CoindArr{0,k} \comp \kerb{K}{X} \\
            &= k \comp (\CoindArr{0,1_X} \comp \kerb{K}{X})\text{,}
        \end{split}
    \]
    where we used that \(\kappa\) is a kernel of \(\del\) for the second equality, so \(\zeta = \CoindArr{0,1_X} \comp \kerb{K}{X}\) since \(k\) is monic, which is indeed the trivial action of \(K\) on \(X\) as announced (see \cref{ex:triv action}).
\end{proof}

\begin{remark}
    Let us present an alternative proof for this lemma using \cite[Theorem~4.1]{Sha23} which gives a characterisation of central arrows (note that a semi-abelian category is indeed regular---by definition---and subtractive---see \cite[Proposition~3]{Jan05} and \cite[Proposition~3.1.18]{BB04}). Consider the following commutative diagram
    \[
        \begin{tikzcd}
            X \arrow[r, "{(1_X,0)}"] \arrow[d, equal] & X \times K \arrow[d, "1_X \times \kappa"] & K \arrow[l, "{(\kappa,1_K)}"'] \arrow[d, "\kappa"] \\
            X \arrow[r, "{(1_X,0)}"] \arrow[rd, "k"'] & X \times X \arrow[d, "\varphi", dotted] & X \arrow[l, "\diag_X"'] \arrow[ld, "e \comp \del"] \\
            & X \rtimes_\xi B
        \end{tikzcd}
    \]
    where the dotted arrow \(\varphi\) is given by the Peiffer Condition \(\Commut{k}{e \comp \del}_{\conjact{X}{X}} = 0\). By \cref{prop:precomp commutativity}, this means that \(0 = (e \comp \del) \comp \kappa\) and \(k\) commute relatively to the cospan \begin{tikzcd}[cramped]
        K \arrow[r, "{(\kappa,1_K)}"] & X \times K & X \arrow[l, "{(1_X,0)}"']
    \end{tikzcd} so, by \cref{prop:postcomp commutativity}, it is also the case for \(0\) and \(1_X\) since \(k\) is monic, which lets us conclude that \(\kappa\) is central by \cite[Theorem~4.1]{Sha23} (see \cref{exs:commutativity}).

    Moreover, let us remark that, for these two proofs of \cref{lemma:kappa central}, we only needed the Peiffer Condition and not the Precrossed Module Condition.
\end{remark}

Now, we consider a central extension \(\del \from X \to B\) and will see how it naturally has a (unique) structure of crossed module. By \cref{rem:Peiffer graph}, the action \(\xi\) of this crossed module \((\del,\xi)\) must make the square
\begin{eqdiagr}
    \label{diagr:def xi del reg epi}
    \begin{tikzcd}[squared]
        X \flat X \arrow[r, "\conjact{X}{X}"] \arrow[d, "\del \flat 1_X"'] & X \arrow[d, equal] \\
        B \flat X \arrow[r, "\xi"'] & X
    \end{tikzcd}
\end{eqdiagr}
commute. Using the centrality of \(\del\) and the fact that it is a regular epimorphism, one can show that such an arrow \(\xi \from B \flat X \to X\) exists~\cite[Example~6.2]{HVdL11}. (For example, in a variety of algebras, \(\del\) is a quotient and this diagram tells us that the action by an element \(b \in B\) is given by choosing a representative \(x\) of \(b\) (i.e.\ an element \(x \in X\) such that \(\del(x) = b\)) and acting by conjugation with this \(x\), the centrality of \(\del\) assuring that this procedure is well-defined.) Once we know the existence of this arrow, we can show that it is indeed an action using the fact that \(\conjact{X}{X}\) is an action and that \(\del \flat 1_X\) is (regularly) epic~\cite[Lemma~2.6]{dMVdL20}. Let us now check the two commutator conditions. By \cref{rem:Peiffer graph}, the square we used to define \(\xi\) is equivalent to the condition \PFF{}\@. For \PCM{}\@, consider the following diagram.
\[
    \begin{tikzcd}[squared]
        X \flat X \arrow[r, "\conjact{X}{X}"] \arrow[d, "\del \flat 1_X"'] & X \arrow[d, equal] \\
        B \flat X \arrow[r, "\xi"] \arrow[d, "B \flat \del"'] & X \arrow[d, "\del"] \\
        B \flat B \arrow[r, "\conjact{B}{B}"'] & B
    \end{tikzcd}
\]
By \cref{lemma:equivariance => commutativity}, we want to show that the bottom square is commutative. Since the top square is commutative by construction of \(\xi\) and the arrow \(\del \flat 1_X\) is epic, we must just check that the outer rectangle is also commutative, which is indeed the case since
\[
    \begin{split}
        \conjact{B}{B} \comp \bigl((B \flat \del) \comp (\del \flat 1_X)\bigr) &= (\CoindArr{1_B,1_B} \comp \kerb{B}{B}) \comp (\del \flat \del) = \CoindArr{1_B,1_B} \comp \bigl((\del+\del) \comp \kerb{X}{X}\bigr) \\
        &= \CoindArr{\del,\del} \comp \kerb{X}{X} = \del \comp (\CoindArr{1_X,1_X} \comp \kerb{X}{X}) = \del \comp \conjact{X}{X}\text{.}
    \end{split}
\]

\subsubsection*{Conclusion}
To summarise, when \(\del \from X \to B\) is a monomorphism, the Precrossed Module Condition corresponds exactly to the existence of a conjugation action of \(B\) on~\(X\), which is a characterisation of normal subobjects in a semi-abelian context. Moreover, when it is the case, the Peiffer Condition follows automatically. For a regular epimorphism \(\del \from X \to B\), the condition \PFF{} implies that \(\del\) is central (even in a more general case without any assumption on \(\del\)) so we have a central extension. Conversely, given a central extension \(\del \from X \to B\), the condition \PFF{} means that the action \(\xi\) makes \cref{diagr:def xi del reg epi} commute and, using the centrality of \(\del\), we may show that such an action exists. Then, once we have this action \(\xi\) making \cref{diagr:def xi del reg epi} commute, the condition \PCM{} comes for free.

\section{Further work}
The theory of twisted commutators can be further developed in several different ways. We name a few.

First of all, more examples of twisted commutators can be found ``in nature''; we are expecting that, often, the existence of an arrow may be seen as twisted commutativity. One situation which we would like to better understand from our perspective is the so-called \emph{Peiffer commutator} of~\cite{CMM17}. In fact, there are several instances of twisted commutativity in that article.

Another way to extend the current theory would be beyond the condition \SH{}\@. In the case of the classical commutator, this involves ternary commutators, as explained in~\cite{HVdL13}. So, is there an appropriate concept of \emph{higher-order twisted cosmash product}? And a corresponding \emph{higher-order twisted commutator}? In the classical case, there is a join decomposition formula for subobjects \(K\), \(L\), \(M \leq X\):
\[
    [K,L \join M] = [K,L] \join [K,M] \join [K,L,M]\text{,}
\]
which explains the sudden appearance of a ternary commutator. Does a similar decomposition make sense for twisted commutators? Can we perhaps decompose a twisted commutator in terms of ordinary commutators, or decompose ordinary commutators in terms of the twisted commutator? The same question arises for more exotic types of commutators, such as, in particular, the ones considered in~\cite{SVdL21}---which is what initially motivated us to start these investigations.

Finally, we believe it would be interesting to explore relations with categorical Galois theory~\cite{BJ01}, with the aim of extending the use of Galois theory to the study of abelian non-central extensions.

\section*{Acknowledgements}
Many thanks to Nelson Martins-Ferreira for fruitful discussions on the subject of the article and, in particular, for suggesting the level of generality in \cref{def:rel commutativity}.

\printbibliography

\end{document}